\newcommand{\Tr}{\mathrm{Tr}}
\newcommand{\mc}[1]{\mathcal{#1}}
\newcommand{\ud}{\,d}
\newcommand{\bra}[1]{\langle#1\vert}
\newcommand{\ket}[1]{\vert#1\rangle}
\newcommand{\Or}{\mathcal{O}}
\newcommand{\I}{\imath}
\newcommand{\RR}{\mathbb{R}}
\newcommand{\re}{r_{\ve}}
\newcommand{\dre}{\dot{r}_{\ve}}
\newcommand{\ddre}{\ddot{r}_{\ve}}
\newcommand{\pe}{p_{\ve}}
\newcommand{\dpe}{\dot{p}_{\ve}}
\newcommand{\xe}{x_{\ve}}
\newcommand{\dxe}{\dot{x}_{\ve}}
\newcommand{\ddxe}{\ddot{x}_{\ve}}
\newcommand{\ye}{y_{\ve}}
\newcommand{\ret}{r_{\ve}}
\newcommand{\pet}{p_{\ve}}
\newcommand{\yet}{y_{\ve}}
\newcommand{\dyet}{\dot{y}_{\ve}}
\newcommand{\yett}{\widetilde{y}_{\ve}}
\newcommand{\dyett}{\dot{\widetilde{y}}_{\ve}}
\newcommand{\yt}{\widetilde{y}}
\newcommand{\dyt}{\dot{\widetilde{y}}}
\numberwithin{equation}{section}
\numberwithin{figure}{section}
\newtheorem{thm}{\protect\theoremname}
\newtheorem{lem}[thm]{\protect\lemmaname}
\newtheorem{rem}[thm]{\protect\remarkname}
\newtheorem{prop}[thm]{\protect\propositionname}
\newtheorem{assumption}[thm]{\protect\assumptionname}
\newtheorem{defn}[thm]{\protect\definitionname}
\providecommand{\definitionname}{Definition}
\providecommand{\assumptionname}{Assumption}
\providecommand{\corollaryname}{Corollary}
\providecommand{\lemmaname}{Lemma}
\providecommand{\propositionname}{Proposition}
\providecommand{\remarkname}{Remark}
\providecommand{\theoremname}{Theorem}
\title{Towards sharp error analysis of \\extended Lagrangian molecular dynamics}
\author{Dong An\thanks{Department of Mathematics, University of California, Berkeley,
CA 94720. Email: \texttt{dong\_an@berkeley.edu}}
\and
Lin Lin\thanks{Department of Mathematics, University of California, Berkeley, and Computational Research Division, Lawrence Berkeley National Laboratory, Berkeley, CA 94720. Email: \texttt{linlin@math.berkeley.edu}}
\and
Michael Lindsey\thanks{Department of Mathematics, Courant Institute of Mathematical Sciences, New York University, New York, NY 10012.  Email:
\texttt{michael.lindsey@cims.nyu.edu}}
}
\begin{document}

\global\long\def\ve{\varepsilon}
\global\long\def\R{\mathbb{R}}
\global\long\def\Rn{\mathbb{R}^{n}}
\global\long\def\Rd{\mathbb{R}^{d}}
\global\long\def\E{\mathbb{E}}
\global\long\def\P{\mathbb{P}}
\global\long\def\bx{\mathbf{x}}
\global\long\def\vp{\varphi}
\global\long\def\ra{\rightarrow}
\global\long\def\smooth{C^{\infty}}
\global\long\def\Tr{\mathrm{Tr}}
\global\long\def\bra#1{\left\langle #1\right|}
\global\long\def\ket#1{\left|#1\right\rangle }

\maketitle

\begin{abstract}
The extended Lagrangian molecular dynamics (XLMD) method provides a useful framework for reducing the computational cost of a class of molecular dynamics simulations with constrained latent variables. The XLMD method relaxes the constraints by introducing a fictitious mass $\varepsilon$ for the latent variables, solving a set of singularly perturbed ordinary differential equations. While favorable numerical performance of XLMD has been demonstrated in several different contexts in the past decade, mathematical analysis of the method remains scarce. We propose the first error analysis of the XLMD method in the context of a classical polarizable force field model. While the dynamics with respect to the atomic degrees of freedom are general and nonlinear, the key mathematical simplification
of the polarizable force field model is that the constraints on the latent variables are given by a linear system of equations. We prove that when the initial value of the latent variables is \emph{compatible} in a sense that we define, XLMD converges as the fictitious mass $\varepsilon$ is made small with $\mathcal{O}(\varepsilon)$ error for the atomic degrees of freedom and with $\mathcal{O}(\sqrt{\varepsilon})$ error for the latent variables, when the dimension of the latent variable $d'$ is 1. Furthermore, when the initial value of the latent variables is improved to be \emph{optimally compatible} in a certain sense, we prove that the convergence rate can be improved to $\mathcal{O}(\varepsilon)$ for the latent variables as well. Numerical results verify that both estimates are sharp not only for $d' =1$, but also for arbitrary $d'$. In the setting of general $d'$, we do obtain convergence, but with the non-sharp rate of $\mc{O}(\sqrt{\ve})$ for both the atomic and latent variables.
\end{abstract}

\section{Introduction}

Molecular dynamics simulation of a system with many atoms often requires solving a set of self-consistent equations for a set of latent variables in order to obtain the potential energy and the atomic force. Examples include \textit{ab initio} molecular dynamics (AIMD) ~\cite{CarParrinello1985,Martin2008,MarxHutter2009},
reactive force field (ReaxFF) \cite{DuinDasguptaLorantEtAl2001}, polarizable force field (PFF)~\cite{PonderWuRenEtAl2010,AlbaughDemerdashHead-Gordon2015}, etc. If such self-consistent equations are to be solved fully self-consistently, then the computational cost can be very high. On the other hand, inaccurate solution of these equations often leads to noticeable energy drifts and inaccurate (or even unstable) results for long-time simulation. Various attempts have been made to tackle this problem in the past few decades across different fields. For example, in AIMD, the latent variables are the electronic wavefunctions, and the self-consistent equations are the Kohn-Sham equations \cite{KohnSham1965}. The seminal work of the Car-Parrinello molecular dynamics (CPMD) \cite{CarParrinello1985} constructs an extended Lagrangian (XL) that introduces a fictitious mass $\varepsilon$ for the electronic wavefunctions. Instead of solving the Kohn-Sham equations self-consistently, CPMD propagates the electronic wavefunctions efficiently following fictitious Newtonian dynamics, similar to those for the atoms. In the past decade, Niklasson and co-workers \cite{Niklasson2006,Niklasson2008,Niklasson2012} have introduced another type of extended Lagrangian molecular dynamics (XLMD). XLMD also associates a fictitious mass to electronic degrees of freedom but in a way that is more flexible than CPMD. In classical simulation with a PFF model, the latent variable is the induced dipole field. Head-Gordon and co-workers have generalized the idea of XLMD to accelerate the PFF simulation\footnote{The name of the method is ``inertial extended Lagrangian with zero-self-consistent field iteration'' (iEL/0-SCF).}~\cite{AlbaughDemerdashHead-Gordon2015,AlbaughHead-Gordon2017,AlbaughNiklassonHead-Gordon2017}. It was found that in a number of cases, the number of self-consistent field iterations can be even set to zero, while the dynamics remains accurate and stable.

Despite the practical success of extended Lagrangian type methods, mathematical analysis on this topic remains scarce. In the context of AIMD, as the fictitious mass $\varepsilon\to 0$, the convergence of CPMD has been established \cite{PastoreSmargiassiBuda1991,BornemannSchutte1998} for insulating systems with an $\Or(\sqrt{\varepsilon})$ convergence rate. In \cite{LinLuShao2014}, the effectiveness of the XLMD method was studied in the linear response regime (with respect to both atomic and latent degrees of freedom). It was found that XLMD can be accurate for both insulating and metallic systems and that the convergence rate can be improved to be $\Or(\varepsilon)$. However, such analysis was based on explicit diagonalization and perturbation theory, which is difficult to generalize to nonlinear systems.

In this paper, we give the first convergence analysis of the  XLMD method in the context of the PFF model. Compared to the general setup of XLMD, the key mathematical simplification of the PFF model is that self-consistent equations are linear with respect to the latent variables. Meanwhile, the dynamics for the atomic degrees of freedom are still general and nonlinear. The convergence of the XLMD method crucially depends on the initial value of the latent variables. We prove that when the initial condition of the latent variables is \emph{compatible} (see \cref{def:initial}), the XLMD method converges, and the convergence rate is $\Or(\sqrt{\varepsilon})$ for both the atomic and the latent variables. When the dimension $d'$ of the latent variable is one (though the dimension of the atomic degrees of freedom can be arbitrary), we prove that the error for the atomic variables can be improved to $\Or(\varepsilon)$. Numerical results verify that the rates of $\Or(\varepsilon)$ and $\Or(\sqrt{\varepsilon})$, respectively, for the atomic and latent variables are sharp for arbitrary $d'$. Meanwhile, the initial condition of the latent variables can be improved to be \emph{optimally compatible} in a sense that we define. In such a case, numerical results indicate that the convergence rate of the atomic degrees of freedom remains $\Or(\varepsilon)$, but the convergence rate of the latent variables improves to $\Or(\varepsilon)$. We prove that when $d'=1$, the convergence rate of the latent variables is indeed $\Or(\varepsilon)$. Our sharp proofs in the $d'=1$ case rely on certain special commutative properties which allow for detailed analysis of the one-dimensional harmonic oscillator with time-dependent mass.  Hence the generalization of our sharp results to higher-dimensional systems may require nontrivial modifications.

Our analysis applies to XLMD method, which is specified by a set of deterministic ordinary differential equations. We remark that a variant of the XLMD method applies a thermostat to the auxiliary variables \cite{Niklasson2009}, where a stochastic force term is introduced to balance the possible accumulation of numerical errors. It has been found numerically that the kinetic energy of the latent variable may accumulate in a long-time ReaxFF simulations \cite{Tanetal2020}. Mathematically, the introduction of a stochastic thermostat effectively enforces ergodicity of the latent variables in the limit $\varepsilon\to 0$ and can simplify the analysis of the method. For the PFF model, the stochastic extended Lagrangian molecular dynamics (S-XLMD) method \cite{AnChengHead-GordonEtAl2019} can converge with arbitrary initial condition for the latent variable. However, the convergence rate for the atomic degrees of freedom can only be $\Or(\sqrt{\varepsilon})$, which is weaker than that of the XLMD method with compatible initial conditions, at least in the context of short time simulation.

The rest of the paper is organized as follows. In \cref{sec:setup}, we discuss the mathematical setting of the XLMD method for the PFF model and state the main results. The details of the first part of the main result (error analysis when the dimension of the latent variable is arbitrary) are given in \cref{sec:analysis_general}, while those of the second part (error analysis when the latent variable is one-dimensional) are given in \cref{sec:analysis_1d}. In fact \cref{sec:analysis_1d} bootstraps from the error bound proved in the preceding \cref{sec:analysis_general}, hence cannot be read independently. We validate the error analysis with numerical results in \cref{sec:numer}.

\section{Problem setup and main results}\label{sec:setup}

In a simplified mathematical setting, the problem can be stated as
follows. Let $r\in \R^{d}$ be the collection of atomic positions, and
$x\in \R^{d'}$ be the latent variable (the induced dipole in the polarizable force field model). 
Let $F(r)$ be an external force involving only the atomic positions. Of particular interest is the case of a conservative force field, i.e., the case in which $F$ is determined by an external potential field $U(r)$ via $F(r) = - \frac{\partial U}{\partial r}(r)$.\footnote{In fact our main results do not directly depend on any assumption of a conservative force, though we will use such an assumption to guarantee certain \emph{a priori} bounds needed for our analysis.}
Let $Q(r,x)$ be the interaction energy between the atoms and the latent
variable. In the polarizable force field model, $Q(r,x)$ is a quadratic function in terms of $x$: 
\begin{equation}\label{eqn:Hamiltonian_Quadratic}
    Q(r,x) = \frac{1}{2}x^{\top} A(r)x - b(r)^{\top}x.
\end{equation}
Here the mappings $b:\R^{d}\ra\R^{d'}$ and $A:\R^{d}\ra\mathcal{S}_{++}^{d'}$ are smooth, where $\mathcal{S}_{++}^{d'}$ denotes the set of real symmetric positive definite $d' \times d'$ matrices. For a given $r$, the latent variable $x$ that minimizes the interaction energy is determined by the equation $\frac{\partial Q}{\partial x}(r,x)=0$, i.e., by the linear  system of equations 
\[
A(r) x = b(r).
\] 
Since $A(r)$ is positive definite, it is in particular invertible, and the solution $x(r)$ is unique for all $r$.

Then the polarizable force field simulation requires the solution of the following system of differential-algebraic equations 
\begin{subequations}\label{eqn:General_Dynamics}
\begin{align}
    \ddot{r}_{\star}(t) &= F(r_{\star}(t)) - \frac{\partial Q}{\partial
    r}(r_{\star}(t),x_{\star}(t)), \label{eqn:General_Dynamics_a}\\
    0 &= b(r_{\star}(t))-A(r_{\star}(t))x_{\star}(t), \label{eqn:General_Dynamics_b}
\end{align}
\end{subequations}
for all $0 \leq t \leq t_f$, subject to certain initial conditions
$r_{\star}(0),\dot{r}_{\star}(0)$.  Here the subscript $\star$ is used to
indicate the exact solution of Eq.~\eqref{eqn:General_Dynamics}.  Note
that the initial condition for $x$ need not be specified, since it can be
determined from $r_{\star}(0)$ through Eq.~\eqref{eqn:General_Dynamics_b}.

In molecular dynamics simulation, we are generally more interested in the accuracy of the atomic trajectory $r(t)$ than that of the latent variable $x(t)$.
Nonetheless, the solution of the linear 
system~\eqref{eqn:General_Dynamics_b} (typically by iterative methods for large systems of interest) is often the most costly step in a polarizable force field simulation.  Following the setup of \eqref{eqn:General_Dynamics}, the XLMD method can be introduced as follows. We first define an extended Lagrangian as
\begin{equation}
  L_{\ve} = \frac12 |\dre|^2 + \frac{\varepsilon }{2} |\dxe|^2 - U(\re) -
  Q(\re,\xe),
  \label{eqn:extended_lagrangian}
\end{equation}
where $\varepsilon > 0$ can be interpreted as a fictitious mass for the latent variable $x_{\varepsilon}$. The corresponding Euler-Lagrange
equations are
\begin{subequations}\label{eqn:General_Deterministic}
\begin{align}
    \ddre &= F(\re) - \frac{\partial Q}{\partial r}(\re,\xe), \label{eqn:General_Deterministic_a}\\
    \varepsilon \ddxe &= -\frac{\partial Q}{\partial x}(\re,\xe)=b(\re)-A(\re) \xe. \label{eqn:General_Deterministic_b}
\end{align}
\end{subequations}
When the force $F(r)$ is conservative, Eq.~\eqref{eqn:General_Deterministic} is a singularly
perturbed Hamiltonian system, and it 
can be discretized with symplectic or time-reversible integrators to
obtain long-time stability~\cite{HairerLubichWanner2006}.  
Note that the value of $\sqrt{\ve}$ provides an upper
bound for the time step of second order numerical
integrators (up to a multiplicative constant) ~\cite{Niklasson2008,AlbaughNiklassonHead-Gordon2017,AlbaughHead-Gordon2017}. Therefore it is desirable choose $\ve$ to be not too small in practice.
Although Eq.~\eqref{eqn:General_Deterministic} introduces a systematic
error in terms of $\ve$, when $\ve$ is chosen properly the XLMD
method often outperforms the original (discretized) dynamics in terms of
efficiency and long-time stability while still maintaining sufficient accuracy
for the atomic trajectory.

Note that initial conditions for $\xe$ and $\dxe$ are needed 
for~\eqref{eqn:General_Deterministic}. 
A natural choice for $\xe(0)$ is 
\begin{equation}\label{eqn:compatible_initial_value}
    \xe(0) = x_{\star}(0) = A(r_{\star}(0))^{-1}b(r_{\star}(0)) {,}
\end{equation}
which requires the linear system to be solved very accurately at the beginning.
Moreover, a natural choice for $\dxe(0)$ can also be derived as
\begin{equation}\label{eqn:optimally_compatible_initial_value}
\begin{split}
     \dxe(0) = \dot{x}_{\star}(0)
     = &-A(r_{\star}(0))^{-1}\left[\sum_{k=1}^{d} \dot{r}_{\star,k}(0)\frac{\partial A}{\partial r_k}(r_{\star}(0))\right] A(r_{\star}(0))^{-1}b(r_{\star}(0))\\
     &\quad +A(r_{\star}(0))^{-1}\left[\sum_{k=1}^{d} \dot{r}_{\star,k}(0)\frac{\partial b}{\partial r_k}(r_{\star}(0))\right]{,}
\end{split}
\end{equation}
where the second equality can be obtained by differentiating Eq.~\eqref{eqn:General_Dynamics_b} and then letting $t = 0$. \\
\begin{defn}[Optimally compatible and compatible initial condition]\label{def:initial}
We say that we have chosen the \emph{optimally compatible} initial condition
if $\xe(0)$ and $\dxe(0)$ are specified by \cref{eqn:compatible_initial_value,eqn:optimally_compatible_initial_value}. If $\xe(0)$ satisfies \cref{eqn:compatible_initial_value} but $\dxe(0)$ is only given in a way that is uniformly bounded with respect to $\ve$, we say that we have chosen a \emph{compatible} initial condition.\\
\end{defn} 

As we will see later, choosing a compatible initial condition is essential 
for the convergence of XLMD. In turn optimal compatibility can ensure even better convergence as $\varepsilon \ra 0$ for the latent variable.

Consider a fixed time interval $[0,t_f]$ with $t_f = \Or(1)$ as $\varepsilon \ra 0$. Throughout the paper $C$ will denote a sufficiently large constant that is independent of $\varepsilon$ (though perhaps dependent on other aspects of the problem specification, e.g., the potential $U$).  Now we enumerate several technical assumptions that we need for our results.\\
\begin{assumption}\label{assump:tech}
We make the following assumptions.
\begin{enumerate}[label={(\roman*)}]
    \item $A:\R^{d}\ra\mathcal{S}_{++}^{d'}$ is a $C^{3}$ map, and 
    there exists $C>0$ such that $A(r)\succeq C^{-1}$ for all $r\in\R^{d}$.
    \item $b:\R^{d}\ra\R^{d'}$ is a $C^{3}$ map. 
    \item $F:\R^{d}\ra\R^{d}$ is a $C^{2}$ map.
    \item All the initial values for $(r_{\star},p_{\star})$ and $(\re,\pe,\xe,\dxe)$ are 
    bounded independently of $\varepsilon$, with 
    $r_{\star}(0) = \re(0), p_{\star}(0) = \pe(0)$.
    \item There exist unique solutions for the systems~\eqref{eqn:General_Dynamics} and~\eqref{eqn:General_Deterministic} on $[0,t_f]$. Furthermore, the solutions $r_{\star}, \re, \xe$ are $C^3$ functions and satisfy \emph{a priori} bounds  
    $\left|\frac{d^{k}r_{\star}}{dt^{k}}\right|,\left|\frac{d^{k}r_{\ve}}{dt^{k}}\right|\leq C$ for $k=0,1,2$, and $|\xe|, \sqrt{\varepsilon}|\dxe| \leq C$, where $C$ is a constant independent of $\ve$. \\
\end{enumerate}
\end{assumption}

The first assumption that $A$ is globally positive definite is physical and satisfied in the polarizable force field model \cite{AlbaughNiklassonHead-Gordon2017}. 
The last assumption assumes the global existence and uniqueness 
of the solutions of both the exact MD~\eqref{eqn:General_Dynamics} 
and the XLMD~\eqref{eqn:General_Deterministic} with 
\emph{a priori} estimates that are important for 
our analysis. 
If $F$ is obtained as the gradient of a potential $U$
bounded from below and $b$ is bounded, then the
last assumption follows from the preceding assumptions
(i), (ii), (iii), and (iv). 
We summarize this remark in the following proposition.\\
\begin{prop}\label{prop:solution}
    Consider the conservative force
    $F = - \frac{\partial U}{\partial r}$, where $U: \R^d\ra\R$ is a $C^{2}$ map bounded from below. 
    Assume moreover that $b$ is bounded. 
    Then in Assumption~\ref{assump:tech}, statements (i), (ii), (iii), and (iv) imply statement (v). 
\end{prop}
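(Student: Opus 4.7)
The plan is to establish the uniform-in-$\ve$ \emph{a priori} bounds of statement (v) via energy conservation, and then obtain existence, uniqueness, and $C^3$ regularity on $[0,t_f]$ from standard ODE theory. For the XLMD system~\eqref{eqn:General_Deterministic}, I would first write down the conserved energy
\begin{equation*}
E_\ve(t) = \tfrac12|\dre(t)|^2 + \tfrac{\ve}{2}|\dxe(t)|^2 + U(\re(t)) + Q(\re(t),\xe(t))
\end{equation*}
arising from the Euler--Lagrange structure of~\eqref{eqn:extended_lagrangian}, and observe that assumption (i) combined with boundedness of $b$ yields the coercivity estimate $Q(r,x) \geq \tfrac{1}{4C}|x|^2 - C'$ uniformly in $r$ (obtained from $\tfrac12 x^\top A(r) x \geq \tfrac{1}{2C}|x|^2$ and a Cauchy--Schwarz absorption of the linear term). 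Since $U$ is bounded below and $E_\ve(0)$ is uniformly bounded by (iv), conservation $E_\ve(t)=E_\ve(0)$ then delivers uniform bounds on $|\dre|$, $\sqrt{\ve}|\dxe|$, and $|\xe|$. Integrating $\dre$ gives $|\re(t)|\leq C$ on $[0,t_f]$, and plugging into \eqref{eqn:General_Deterministic_a} with (iii) then yields $|\ddre|\leq C$.

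For the exact DAE~\eqref{eqn:General_Dynamics}, I would use that $A(r)\succeq C^{-1}I$ is uniformly invertible to solve the algebraic constraint explicitly as $x_\star(r) = A(r)^{-1}b(r)$, a $C^3$ function of $r$. Substituting into~\eqref{eqn:General_Dynamics_a} reduces the DAE to the second-order Hamiltonian ODE $\ddot r_\star = -\tfrac{\partial\tilde U}{\partial r}(r_\star)$ with effective potential $\tilde U(r) := U(r) - \tfrac12 b(r)^\top A(r)^{-1} b(r)$. The envelope identity $\tfrac{\partial Q}{\partial x}(r,x_\star(r)) = A(r)x_\star(r) - b(r) = 0$ is what kills the would-be $\partial_x Q \cdot \partial_r x_\star$ contribution in $\tfrac{d}{dr}\tilde U$ and makes the reduced system Hamiltonian. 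Since $b$ is bounded and $A^{-1}$ is uniformly bounded in operator norm, $\tilde U$ is bounded below, and the same energy argument as above then controls $|\dot r_\star|$, whence $|r_\star|$ and $|\ddot r_\star|$ on $[0,t_f]$.

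Finally, I would invoke Picard--Lindel\"of for local existence and uniqueness of both systems---written as first-order systems, their vector fields are $C^2$ in the state variables under (i)--(iii), since $\tfrac{\partial Q}{\partial r}$ is polynomial in $x$ with $C^2$ coefficients in $r$---and use the uniform bounds just derived to preclude finite-time blow-up, extending the unique solutions to all of $[0,t_f]$. The asserted $C^3$ regularity of $r_\star,\re,\xe$ then follows from the classical fact that a solution of $\dot Y = f(Y)$ with $f\in C^2$ is itself $C^3$. The one mildly delicate ingredient, more conceptual than technical, is the uniform-in-$r$ coercivity of $Q(r,\cdot)$; this is exactly what (i) together with the boundedness of $b$ is tailored to supply, and without it energy conservation alone would not suffice to bound $|\xe|$ uniformly in $\ve$.
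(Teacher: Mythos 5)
Your proposal is correct and follows essentially the same route as the paper: conserved energies for both systems (with the DAE reduced via $x_\star=A^{-1}b$ to a Hamiltonian ODE with effective potential $U-\tfrac12 b^\top A^{-1}b$), coercivity of $Q(r,\cdot)$ from $A\succeq C^{-1}$ and boundedness of $b$ to get the uniform bounds on $p$, $\sqrt{\ve}\,\dot x_\ve$, $x_\ve$, and hence on $r$ and $\ddot r$, followed by local existence/uniqueness from standard ODE theory and a continuation argument to reach $[0,t_f]$. The only differences are cosmetic (you fold the bound on $|\xe|$ into a single Young-inequality coercivity estimate, and you make the envelope identity explicit), so no gaps to report.
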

\vspace{5mm}
The proof is given in the Appendix. Now we may state our main result.\\

\begin{thm}\label{thm:main_theorem}
    Let $(r_{\star},p_{\star})$ solve the exact MD in \cref{eqn:General_Dynamics}
    and $(\re,\pe,\xe,\dxe)$ solve the XLMD in \cref{eqn:General_Deterministic},
    and assume that the initial condition for the latent variable is compatible according to \cref{def:initial}.  Then under \cref{assump:tech},
\begin{enumerate}[label={(\roman*)}]
    \item for general $d'$, there exists an $\ve$-independent constant $C > 0$ such that 
    \begin{equation}\label{eqn:coarse_estimate} 
        \lvert \ret(t) -
  r_{\star}(t)\rvert, \ \lvert \pet(t) - p_{\star}(t)\rvert  \leq C \ve^{1/2}
    \end{equation}
    for all $t\in [0,t_f]$.
    Under these conditions, we also have that
        \begin{equation}\label{eqn:coarse_estimate2} 
        \lvert x_\ve(t) -
  x_{\star}(t)\rvert \leq C \ve^{1/2}
    \end{equation}
     for all $t\in [0,t_f]$.
    
    \item if the latent variable has dimension $d' = 1$, 
    then we have a sharp estimate 
    \begin{equation}\label{eqn:sharp_estimate}
        \lvert \ret(t) -
  r_{\star}(t)\rvert, \  \lvert \pet(t) - p_{\star}(t)\rvert  \leq C \ve
  \end{equation}
  for all $t\in [0,t_f]$.
  Under these conditions, we have that~\eqref{eqn:coarse_estimate2} holds in general, but if the initial condition is moreover optimally compatible, then we have the tighter estimate
  \begin{equation}
       \lvert x_\ve (t) - x_{\star}(t)\rvert  \leq  C \ve.
    \end{equation}
    for all $t\in [0,t_f]$.\\
\end{enumerate}
\end{thm}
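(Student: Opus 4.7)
The plan is to introduce the slow variable $\wt{x}(r) := A(r)^{-1}b(r)$ and the residual $y_\ve := \xe - \wt{x}(\re)$. Substituting into \eqref{eqn:General_Deterministic_b} yields the forced time-dependent harmonic oscillator
\begin{equation*}
  \ve\, \ddot{y}_\ve + A(\re)\, y_\ve = -\ve\, \frac{d^2}{dt^2}\wt{x}(\re),
\end{equation*}
and every bound below reduces to analysis of this oscillator combined with a Gronwall estimate on $e := \re - r_{\star}$.

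For Part (i), I form the energy $E_y := \tfrac{\ve}{2}|\dot y_\ve|^2 + \tfrac12\, y_\ve^\top A(\re)\, y_\ve$. Differentiating and using the oscillator equation, the dominant terms cancel and we are left with $\dot E_y \le C E_y + C\ve$. A compatible initial condition forces $y_\ve(0)=0$ with $|\dot y_\ve(0)|=\Or(1)$, so $E_y(0) = \Or(\ve)$, and Gronwall yields $E_y(t)=\Or(\ve)$, hence $|y_\ve|=\Or(\sqrt\ve)$ and $|\dot y_\ve|=\Or(1)$ on $[0,t_f]$. Next I expand $\ddot e = [F(\re)-F(r_{\star})] - [\partial_r Q(\re,\xe)-\partial_r Q(r_{\star},x_{\star})]$ and use the quadratic structure of $Q$ in $x$ to split the right-hand side into a slow Lipschitz-in-$e$ part plus an $\Or(|y_\ve|)=\Or(\sqrt\ve)$ fast part. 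A standard Gronwall argument on $|e|^2+|\dot e|^2$ then gives \eqref{eqn:coarse_estimate}, and the identity $\xe-x_{\star} = y_\ve + [\wt x(\re)-\wt x(r_{\star})]$ yields \eqref{eqn:coarse_estimate2}.

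For the sharp atomic estimate in Part (ii), the key observation (clean in the scalar $d'=1$ case) is the explicit expansion
\begin{equation*}
  \frac{\partial Q}{\partial r_k}(r, \wt x(r)+y) - \frac{\partial Q}{\partial r_k}(r, \wt x(r)) = -\,\frac{\partial \wt x}{\partial r_k}(r)\, A(r)\, y + \tfrac12\, \frac{\partial A}{\partial r_k}(r)\, y^2,
\end{equation*}
obtained by differentiating $A(r)\wt x(r)=b(r)$. The $y_\ve^2$ piece contributes only $\Or(\ve)$ by Part (i), so the delicate term in $\ddot e$ is the linear one $\frac{\partial \wt x}{\partial r_k}(\re)\, A(\re)\, y_\ve$. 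I substitute the oscillator identity $A(\re)\,y_\ve = -\ve\,\ddot y_\ve - \ve\,\ddot{\wt{x}}(\re)$ and integrate by parts inside $\int_0^t \frac{\partial \wt x}{\partial r_k}(\re)\, A(\re)\, y_\ve\, d\tau$: the boundary term carries a prefactor $\ve$ and uses $|\dot y_\ve|=\Or(1)$, while the remaining integral is likewise $\Or(\ve)$ because the time derivative of the smooth coefficient is bounded. Hence $|\dot e(t)|\le C\int_0^t|e|\,d\tau + C\ve$, and a final Gronwall argument on $|e|+|\dot e|$ yields \eqref{eqn:sharp_estimate}.

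For the sharp latent estimate under optimal compatibility, introduce the quasi-static correction $\zeta_\ve := -\ve\, A(\re)^{-1}\, \ddot{\wt{x}}(\re)$, which satisfies $A(\re)\,\zeta_\ve = -\ve\,\ddot{\wt{x}}(\re)$ by construction, and decompose $y_\ve = w_\ve + \zeta_\ve$. Then $w_\ve$ solves $\ve\,\ddot w_\ve + A(\re)\, w_\ve = -\ve\,\ddot\zeta_\ve$ with an $\Or(\ve^{3/2})$ right-hand side, the bound $\ddot\zeta_\ve = \Or(\sqrt\ve)$ following from the established bounds on $\re$ together with $\ddot y_\ve = \Or(1/\sqrt\ve)$ via the oscillator equation. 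Optimal compatibility gives $y_\ve(0)=\dot y_\ve(0)=0$, so $w_\ve(0)$ and $\dot w_\ve(0)$ inherit only an $\Or(\ve)$ contribution from $\zeta_\ve$, and $E_w(0)=\Or(\ve^2)$. The analogous energy estimate for $w_\ve$ then yields $|w_\ve|=\Or(\ve)$, and combined with $|\zeta_\ve|=\Or(\ve)$ this produces $|y_\ve|=\Or(\ve)$, hence $|\xe-x_{\star}|=\Or(\ve)$ via the sharp atomic bound. The main technical obstacle throughout Part (ii) is bootstrapping control on higher derivatives of $y_\ve$ and $\zeta_\ve$: in one dimension, $A(\re(t))$ at different times commutes trivially, so the oscillator scalarizes and the integration by parts and quasi-static decomposition close cleanly, whereas for $d'>1$ the non-commutativity generates fast-mode corrections not controlled by this scalar argument, consistent with the theorem's restriction.
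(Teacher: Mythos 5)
Your proposal is essentially correct in structure but follows a genuinely different route from the paper. For the coarse estimates you replace the paper's variation-of-parameters/Duhamel analysis of the homogeneous oscillator (Lemmas 3.1--3.2) with a direct energy estimate on $E_y=\tfrac{\ve}{2}|\dot y_\ve|^2+\tfrac12 y_\ve^\top A(\re)y_\ve$, and you replace the Alekseev--Gr\"obner representation with a plain Gr\"onwall argument on $e=\re-r_\star$; both substitutions are valid and arguably simpler. For the sharp atomic estimate the paper derives explicit oscillatory asymptotics for $y_\ve$ (Lemmas 4.1--4.2, which exploit commutativity when $d'=1$) and then kills the $\Or(\sqrt\ve)$ oscillatory forcing by an oscillatory-integral integration by parts in the linearized error equation. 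You instead use the identity $A(\re)y_\ve=-\ve\ddot y_\ve-\ve\,\tfrac{d^2}{dt^2}\bigl[A(\re)^{-1}b(\re)\bigr]$ together with one integration by parts and the uniform bound $|\dot y_\ve|\le C$; your expansion of $\partial_r Q(r,\wt{x}(r)+y)-\partial_r Q(r,\wt{x}(r))$ is correct (and in fact holds for any $d'$, not just $d'=1$, with the quadratic term read as $\tfrac12 y^\top \partial_{r_k}A\, y$). Notably, nothing in this part of your argument actually uses $d'=1$: the identity, the substitution, and the bound $|\dot y_\ve|\le C$ are all dimension-free, so if written out carefully your proof of the $\Or(\ve)$ bound for $(r,p)$ would extend to general $d'$ --- a case the paper states is beyond its framework and verifies only numerically. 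Your closing remark that commutativity is what makes the argument close for $d'=1$ therefore mischaracterizes your own proof; you should either claim the stronger result or identify where you believe the argument breaks for $d'>1$ (I do not see such a breakdown in the atomic estimate).

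The one genuine gap is in the sharp latent estimate via the quasi-static correction $\zeta_\ve=-\ve A(\re)^{-1}\tfrac{d^2}{dt^2}\bigl[\wt{x}(\re)\bigr]$. Your energy estimate for $w_\ve$ requires a pointwise bound $\ddot\zeta_\ve=\Or(\sqrt\ve)$, and $\ddot\zeta_\ve$ contains $\tfrac{d^4}{dt^4}\bigl[\wt{x}(\re)\bigr]$, which involves both the fourth spatial derivatives of $A^{-1}b$ and the fourth time derivative of $\re$. The former exceeds the $C^3$ regularity granted by Assumption 2.2(i)--(ii), and the latter requires an explicit bootstrap ($\ddot x_\ve=-\ve^{-1}A(\re)y_\ve=\Or(\ve^{-1/2})$, hence $d^4\re/dt^4=\Or(\ve^{-1/2})$, after first establishing $d^3\re/dt^3=\Or(1)$) that you only gesture at. The paper's proof of the same bound stays within the $C^3$ hypotheses because it needs only $\dot\psi_\ve$ bounded, i.e., third derivatives of $A,b$ and $d^3\re/dt^3$. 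So either strengthen the hypotheses to $A,b\in C^4$ and carry out the derivative bootstrap explicitly, or obtain the $\Or(\ve)$ bound on $y_\ve$ by a once-integrated (Duhamel-type) argument that avoids forming $\ddot\zeta_\ve$ pointwise; as written, this step does not follow from the stated assumptions.
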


The proof of (i) and (ii) of \cref{thm:main_theorem} will be given in \cref{sec:analysis_general} and \cref{sec:analysis_1d}, respectively. Numerical results in \cref{sec:numer} confirm that the estimate in  \cref{eqn:sharp_estimate} is sharp. They also indicate that the estimates in (ii) should in fact hold for general $d'$, but a sharp result for general $d'$ is beyond the framework of our analysis.

\section{Error analysis for any $d'$}\label{sec:analysis_general}

We first briefly sketch the main idea for proving \cref{eqn:coarse_estimate}. It is helpful to take a more abstract perspective to see how we will
proceed from our understanding of the dynamics of the $x$ variable to that of the $r$ variable. By defining 
\begin{equation}\label{eqn:def_G}
    G(r,x):=F(r)-\left[\frac{1}{2}x^{\top}\frac{\partial A}{\partial r}(r)x-\frac{\partial b^{\top}}{\partial r}(r)x\right],
\end{equation}
and plugging Eq.~\eqref{eqn:General_Dynamics_b} into Eq.~\eqref{eqn:General_Dynamics_a}, 
we can rewrite the exact MD in terms of $(r,p)$ as
\begin{subequations}\label{eqn:MD_rp}
\begin{align}
    \dot{r}_{\star} &= p_{\star}, \label{eqn:MD_rp_a}\\
    \dot{p}_{\star} &= G(r_{\star},A(r_{\star})^{-1}b(r_{\star})). \label{eqn:MD_rp_b}
\end{align}
\end{subequations}
The XLMD reads as 
\begin{subequations}\label{eqn:0SCF_rp}
\begin{align}
    \dre &= \pe, \label{eqn:0SCF_rp_a}\\
    \dpe &= G(\re,\xe) \label{eqn:0SCF_rp_b}\\
    \varepsilon \ddxe &= b(\re)-A(\re) \xe. \label{eqn:0SCF_rp_c}
\end{align}
\end{subequations}
Since XLMD only introduces a singular perturbation on the latent variable, 
it is reasonable to expect that $\xe$ is close to $A^{-1}(\re)b(\re)$ up to a small perturbation. 
If so, intuitively, $(\re,\pe)$ is governed by an ODE 
which is only a small perturbation of~\eqref{eqn:MD_rp}. Given the same initial value for 
$(r_{\star},p_{\star})$ and $(\re,\pe)$, this implies that 
$(\re,\pe)$ is also a small perturbation of $(r_{\star},p_{\star})$. 

To prove $\xe$ is indeed a small perturbation of $A^{-1}(\re)b(\re)$, 
it is useful to
think of the trajectory $r_{\ve}$ as being fixed and then study the behavior
of $x_{\ve}$ according to Eq.~\eqref{eqn:General_Deterministic_b}, 
which can be viewed as a linear inhomogeneous 
ODE with time-dependent coefficients. We may then use variation of parameters to prove \cref{eqn:coarse_estimate}.


Since we expect that $x_{\ve}\approx A^{-1}(r_{\ve})b(r_{\ve})$, we define the new residual variable
\[
y_{\ve}:=x_{\ve}-A(r_{\ve})^{-1}b(r_{\ve}).
\]
From Eq.~\eqref{eqn:0SCF_rp_c} 
the evolution of $y_{\ve}$ is given by
\begin{equation}\label{eqn:ODE_residue}
    \ve\ddot{y}_{\ve}=-A(\re)y_{\ve}+\ve\psi_{\ve},
\end{equation}
 where
\begin{equation}
    \psi_{\ve}:=-\frac{d^{2}}{dt^{2}}\left[A(r_{\ve})^{-1}b(r_{\ve})\right].
\end{equation}
By Assumption~\ref{assump:tech}, there exists $C$
such that $\vert\psi_{\ve}\vert\leq C$, uniformly in $\ve$.
By the definition of $\ye$, 
the initial conditions for $y_{\ve}$ and $\dot{y}_{\ve}$ are given by 
\begin{equation}
    \yet(0) = 0,\ \dyet(0)=z_{0},
\end{equation}
where $z_0$ is uniformly bounded in $\ve$. Note that by construction $z_0 = 0$ in the optimally compatible case.




It is natural to approach the inhomogeneous linear system of ODEs of~\eqref{eqn:ODE_residue} via Duhamel's principle, which suggests to study the corresponding homogeneous linear system for all starting times $s\in [0,t_f]$. To wit, now consider the homogeneous equation
\begin{subequations}\label{eqn:homogeneous}
\begin{align}
    &\ve\ddot{\yett}=-A(\re)\yett, \label{eqn:homogeneous_a}\\
    &\yett(s) = \eta_0, \ \dyett(s)=\xi_{0}, \label{eqn:homogeneous_b}
\end{align}
\end{subequations}
where the starting time $s$ 
and initial values $\eta_0$, $\xi_{0}$ are arbitrary. 
We define the flow
map for the homogeneous system~\eqref{eqn:homogeneous} by
\begin{equation}
    \Phi_{\ve}^{s,t}(\eta_0, \xi_{0})=\left(\begin{array}{c}
\yett(t)\\
\dyett(t)
\end{array}\right)
\end{equation}
 for $t\geq s$, where $\yett$ is the solution of~\eqref{eqn:homogeneous}.
Define
\begin{equation}
    K_{\ve}(t)=A(\re(t))^{1/2},
\end{equation}
where the matrix square root operation is well defined due to \cref{assump:tech}(i). Also define
$U_{\ve,+}^s(t)$ to be the solution of the following 
initial value problem
\begin{equation}
    \dot{U}_{\ve,+}^s(t)=\I\ve^{-1/2}K_{\ve}(t)U_{\ve,+}^s(t),\quad U_{\ve,+}^s(s)=I_{d}.
\end{equation}
In `physicists' notation, one writes
\begin{equation}
    U_{\ve,+}^s(t) := \mathcal{T}e^{\I\ve^{-1/2}\int_{s}^{t}K_{\ve}(t')\,dt'},
\end{equation}
where $\mathcal{T}$ is the `time ordering operator'. Note that this is merely a notation and can be ignored in favor of the formal definition.

Similarly define 
\begin{equation}
    U_{\ve,-}^s(t)=\mathcal{T}e^{-\I\ve^{-1/2}\int_{s}^{t}K_{\ve}(t')\,dt'},
\end{equation}
 i.e., $U_{\ve,-}^s(t)$ solves
\begin{equation}
    \dot{U}_{\ve,-}^s(t)=-\I\ve^{-1/2}K_{\ve}(t)U_{\ve,-}^s(t),\ \ U_{\ve,-}^s(s)=I_{d'}.
\end{equation} \\
By construction $U_{\ve,+}^s$ and $U_{\ve,-}^s$ are unitary matrices for all $t$.\\


\begin{lem}\label{lem:homogeneous}
Let $\Phi_{\ve}^{s,t}(\eta_0, \xi_{0})$ be the flow map of the homogeneous
system~\eqref{eqn:homogeneous}. Then 
\begin{enumerate}[label={(\roman*)}]
\item $\Phi_{\ve}^{s,t}(\eta_0,\xi_0)$ can be written in the form 
\[
    \Phi_{\ve}^{s,t}(\eta_0, \xi_{0}) = 
    \left(\begin{array}{c}
        U_{\ve,+}^s(t)c_{\ve,+}^s(t)+U_{\ve,-}^s(t)c_{\ve,-}^s(t)   \\
        \I\ve^{-1/2}K_{\ve}(t)\left[U_{\ve,+}^s(t)c_{\ve,+}^s(t)-U_{\ve,-}^s(t)c_{\ve,-}^s(t)\right]   
    \end{array}\right), 
\]

where $c_{\ve,+}^s(t)$ and $c_{\ve,-}^s(t)$ follow the estimates 
\begin{equation}
    \vert c_{\ve,+}^s(t)\vert,\vert c_{\ve,-}^s(t)\vert\leq C(\vert\eta_{0}\vert + \ve^{1/2}\vert\xi_{0}\vert).
\end{equation}
Here $C$ is independent of $\ve$, $\eta_0$ and $\xi_{0}$. 

\item $\Phi$ follows the estimate
\begin{equation}
    \Phi_{\ve}^{s,t}(\eta_0, \xi_{0}) = \left(\begin{array}{c}
        \Or(\vert\eta_{0}\vert + \ve^{1/2}\vert\xi_{0}\vert)   \\
        \Or(\ve^{-1/2}\vert\eta_{0}\vert + \vert\xi_{0}\vert) 
    \end{array}\right) {.}\\
\end{equation}
\end{enumerate}
\end{lem}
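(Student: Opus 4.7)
The plan is to use a WKB-style ansatz, recognizing the two unitary propagators $U_{\varepsilon,\pm}^s$ as the ``fast'' oscillatory solutions and treating the slow envelopes $c_{\varepsilon,\pm}^s$ as the unknowns. Specifically, I would substitute
\[
\yett(t) = U_{\ve,+}^s(t) c_{\ve,+}^s(t) + U_{\ve,-}^s(t) c_{\ve,-}^s(t)
\]
into \eqref{eqn:homogeneous_a} and, following the variation-of-parameters philosophy, impose the auxiliary constraint $U_{\ve,+}^s \dot c_{\ve,+}^s + U_{\ve,-}^s \dot c_{\ve,-}^s = 0$. Using the defining ODEs for $U_{\ve,\pm}^s$, this constraint immediately gives the stated formula
\[
\dyett(t) = \I \ve^{-1/2} K_\ve(t)\bigl[U_{\ve,+}^s(t) c_{\ve,+}^s(t) - U_{\ve,-}^s(t) c_{\ve,-}^s(t)\bigr].
\]
Differentiating once more and using $\dot U_{\ve,\pm}^s = \pm\I \ve^{-1/2} K_\ve U_{\ve,\pm}^s$, the leading $\ve^{-1}$ terms will reproduce exactly $-\ve^{-1} A(r_\ve)\yett$, so that matching with $\ve\ddot{\yett} = -A(r_\ve)\yett$ reduces to a first-order condition on $(c_{\ve,+}^s,c_{\ve,-}^s)$.

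Solving this first-order condition together with the constraint equation yields a coupled linear ODE system of the schematic form
\[
\begin{pmatrix} \dot c_{\ve,+}^s \\ \dot c_{\ve,-}^s \end{pmatrix} = M_\ve(t) \begin{pmatrix} c_{\ve,+}^s \\ c_{\ve,-}^s \end{pmatrix},
\]
where $M_\ve(t)$ is built out of $U_{\ve,\pm}^{s,*}$, $K_\ve^{-1}$, and $\dot K_\ve$. Crucially, each factor is bounded uniformly in $\ve$: $U_{\ve,\pm}^s$ are unitary (verified by differentiating $(U_{\ve,\pm}^s)^* U_{\ve,\pm}^s$ and using self-adjointness of $K_\ve$); $K_\ve^{-1}$ is bounded because $A(r_\ve) \succeq C^{-1} I$ by \cref{assump:tech}(i); and $\dot K_\ve$ is bounded because $A$ is $C^3$, $\dot r_\ve$ is bounded by \cref{assump:tech}(v), and the matrix square root is smooth on the set $\{A \succeq C^{-1} I\}$. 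Hence $\lVert M_\ve(t)\rVert \le C$ uniformly in $\ve$ and $s$, and Gronwall's inequality yields $\lvert c_{\ve,\pm}^s(t)\rvert \le C(\lvert c_{\ve,+}^s(s)\rvert + \lvert c_{\ve,-}^s(s)\rvert)$ for $t \in [s,t_f]$.

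Finally, inverting the ansatz at $t=s$ using $U_{\ve,\pm}^s(s) = I_{d'}$ gives
\[
c_{\ve,+}^s(s) = \tfrac12\bigl(\eta_0 - \I\ve^{1/2} K_\ve(s)^{-1}\xi_0\bigr), \quad c_{\ve,-}^s(s) = \tfrac12\bigl(\eta_0 + \I\ve^{1/2} K_\ve(s)^{-1}\xi_0\bigr),
\]
from which $\lvert c_{\ve,\pm}^s(s)\rvert \le C(\lvert \eta_0\rvert + \ve^{1/2}\lvert \xi_0\rvert)$. Combining with the Gronwall estimate proves (i). Part (ii) then follows by direct inspection: the first component of $\Phi_\ve^{s,t}$ is bounded by $\lvert c_{\ve,+}^s(t)\rvert + \lvert c_{\ve,-}^s(t)\rvert$, while the second component carries an extra factor of $\ve^{-1/2} \lVert K_\ve(t)\rVert$, yielding the claimed $\Or(\ve^{-1/2}\lvert \eta_0\rvert + \lvert \xi_0\rvert)$ bound.

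The main obstacle I anticipate is purely algebraic: verifying that, after substituting the ansatz into the second-order ODE, the $\Or(\ve^{-1})$ terms cancel cleanly despite the fact that $K_\ve$ and $\dot K_\ve$ do not in general commute in the matrix case. This cancellation must be driven by the specific choice of the constraint $U_{\ve,+}^s \dot c_{\ve,+}^s + U_{\ve,-}^s \dot c_{\ve,-}^s = 0$, and one has to be careful to keep $K_\ve$ on the correct side of every product. Once this cancellation is established, the remaining Gronwall estimate is routine because every coefficient appearing in $M_\ve(t)$ is an $\ve$-independent composition of bounded, uniformly invertible operators.
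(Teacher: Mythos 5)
Your proposal is correct and follows essentially the same route as the paper: the ansatz $\yett = U_{\ve,+}^s c_{\ve,+}^s + U_{\ve,-}^s c_{\ve,-}^s$ with the variation-of-parameters constraint, the resulting first-order system for $(c_{\ve,+}^s,c_{\ve,-}^s)$ with uniformly bounded coefficients, Gr\"onwall, and the same initial-value formulas at $t=s$. The cancellation you flag as the main obstacle goes through without any commutation issue, since the constraint leaves $\ddot{\yett} = \I\ve^{-1/2}\bigl[\dot K_\ve(U_{\ve,+}^s c_{\ve,+}^s - U_{\ve,-}^s c_{\ve,-}^s) + K_\ve(\dot U_{\ve,+}^s c_{\ve,+}^s - \dot U_{\ve,-}^s c_{\ve,-}^s) + K_\ve(U_{\ve,+}^s \dot c_{\ve,+}^s - U_{\ve,-}^s \dot c_{\ve,-}^s)\bigr]$ and the middle term equals $-\ve^{-1}K_\ve^2\,\yett = -\ve^{-1}A(r_\ve)\yett$ exactly, with $K_\ve^2$ sitting on the left as required.
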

 
\begin{proof}
For notational simplicity we will omit the dependence
on $\ve$ from the subscripts and the explicit time dependence on $s,t$. Consider the ansatz
\begin{equation}\label{eqn:y}
\yt=U_+c_+ + U_-c_-,
\end{equation}
where $c_+$ and $c_-$ are to be determined. Following the idea of variation of parameters, we assume\begin{equation}
   U_+\dot{c}_+ + U_-\dot{c}_- = 0{.}
\end{equation}
Therefore we have 
\begin{equation}\label{eqn:py}
    \dyt = \dot{U}_+ c_+ + \dot{U}_-c_- 
    = \I \varepsilon^{-1/2} (KU_{+}c_+ - KU_{-}c_-) {,}
\end{equation}
and
\begin{align*}
     \ddot{\widetilde{y}} &= -\varepsilon^{-1} (K^2U_{+}c_+ + K^2U_{-}c_-) + 
    \I\varepsilon^{-1/2}(\dot{K}U_{+}c_+ - \dot{K}U_{-}c_-) 
    + \I\varepsilon^{-1/2} (KU_{+}\dot{c}_+ - KU_{-}\dot{c}_-) \\
    &= -\varepsilon^{-1} A\yt + 
    \I\varepsilon^{-1/2}(\dot{K}U_{+}c_+ - \dot{K}U_{-}c_-) 
    + \I\varepsilon^{-1/2} (KU_{+}\dot{c}_+ - KU_{-}\dot{c}_-) {.}
\end{align*}
Compare with the homogeneous ODE~\eqref{eqn:homogeneous}, 
\begin{equation}
    \I\varepsilon^{-1/2}(\dot{K}U_{+}c_+ - \dot{K}U_{-}c_-) 
    + \I\varepsilon^{-1/2} (KU_{+}\dot{c}_+ - KU_{-}\dot{c}_-) = 0 {.}
\end{equation}
Therefore we obtain an ODE system of $c_+$ and $c_-$, 
\begin{subequations}
\begin{align}
    &U_+\dot{c}_+ + U_-\dot{c}_- = 0 \\
    &\dot{K}U_{+}c_+ - \dot{K}U_{-}c_- + KU_{+}\dot{c}_+ - KU_{-}\dot{c}_- = 0 {,}
\end{align}
\end{subequations}
or equivalently, 
\begin{subequations}\label{eqn:ODE_ab}
\begin{align}
    &\dot{c}_+ = -\frac{1}{2}U_{+}^{-1}K^{-1}\dot{K}U_{+}c_+ 
      + \frac{1}{2}U_{+}^{-1}K^{-1}\dot{K}U_{-}c_- \\
      &\dot{c}_- = \frac{1}{2}U_{-}^{-1}K^{-1}\dot{K}U_{+}c_+ - 
      \frac{1}{2}U_{-}^{-1}K^{-1}\dot{K}U_{-}c_- {.}
\end{align}
\end{subequations}
Here all the matrices in this ODE system are uniformly bounded. 
Specifically, $U_{+}$ and $U_{-}$ are unitary matrices, 
$K^{-1}$ is bounded due to Assumption~\ref{assump:tech}
(and in particular \emph{a priori} bounds for $\re$), and 
$\dot{K}$ is bounded by way of our \emph{a priori} bounds for $\re$ and $\dot{\re}$. 
Then by Gr{\"o}nwall's inequality, 
we have the bounds
\begin{equation}\label{eqn:bounds_c}
    |c_+(t)| \leq C(|c_+(s)|+|c_+(s)|), \quad |c_-(t)| \leq C(|c_-(s)|+|c_-(s)|) {,}
\end{equation}
i.e., $c_+$ and $c_-$ can be totally controlled by their initial values via a constant $C$, independent of $\ve$ and the initial values. (Recall that $t_f = O(1)$ in $\ve$, though our constant would grow exponentially in the final time $t_f$ if it were treated as an independent variable.)
To bound initial values, let $t = s$ in Eq.~\eqref{eqn:y} and~\eqref{eqn:py}:
\begin{align*}
    &c_+(s) + c_-(s) = \eta_0 \\
    &c_+(s) - c_-(s) = -\I\varepsilon^{1/2}K(s)^{-1}\xi_0 {,}
\end{align*}
and
\begin{equation}\label{eqn:homogeneous_initial_value}
    \begin{split}
        &c_+(s) = \frac{1}{2}(\eta_0 - \I\varepsilon^{1/2}K(s)^{-1}\xi_0) \\
    &c_-(s) = \frac{1}{2}(\eta_0 + \I\varepsilon^{1/2}K(s)^{-1}\xi_0) {,}
    \end{split}
\end{equation}
which, together with~\eqref{eqn:bounds_c}, indicates the bounds 
\[
    |c_+(t)| \leq C(\eta_0 + \varepsilon^{1/2}\xi_0), \quad 
    |c_-(t)| \leq C(\eta_0 + \varepsilon^{1/2}\xi_0) {.}
\]
Plugging back into~\eqref{eqn:y} and~\eqref{eqn:py}, 
then we get the desired bound for $\Phi_{\ve}^{s,t}(\eta_0,\xi_0)$.
\end{proof}


We now return to the residual system (\ref{eqn:ODE_residue}).
By introducing the auxiliary variable $z_{\ve}:=\dot{y}_{\ve}$, this
system can be reformulated as a first-order system
\[
\left(\begin{array}{c}
\dot{y}_{\ve}\\
\dot{z}_{\ve}
\end{array}\right)=\left(\begin{array}{c}
z_{\ve}\\
-\ve^{-1}Ay_{\ve}
\end{array}\right)+\left(\begin{array}{c}
0\\
\psi_{\ve}(t)
\end{array}\right)
\]
Then by Duhamel's principle,
\[
\left(\begin{array}{c}
y_{\ve}(t)\\
z_{\ve}(t)
\end{array}\right)=\Phi_{\ve}^{0,t}\left(0,z_{0}\right)
+\int_{0}^{t}\Phi_{\ve}^{s,t}\left(0,\psi_{\ve}(s)\right)\,ds.
\]
 Now by Lemma \ref{lem:homogeneous}, 
the next lemma follows directly.\\

\begin{lem}\label{lem:inhomogeneous}
     Let $y_{\ve}$ be the solution to the residual system~\eqref{eqn:ODE_residue}. Then 
     \[
         \vert y_{\ve} \vert \leq C\ve^{1/2},\ \vert \dot{y}_{\ve} \vert \leq C
     \]
     on $[0,t_f]$.\\
\end{lem}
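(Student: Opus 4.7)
The plan is to directly apply \cref{lem:homogeneous}(ii) to each of the two terms in the Duhamel representation
\[
\left(\begin{array}{c} y_{\ve}(t)\\ z_{\ve}(t)\end{array}\right) = \Phi_{\ve}^{0,t}(0, z_0) + \int_{0}^{t}\Phi_{\ve}^{s,t}(0,\psi_{\ve}(s))\,ds
\]
and then take absolute values, using the fact that $t_f = \Or(1)$ in $\ve$.

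First I would bound the homogeneous contribution $\Phi_{\ve}^{0,t}(0,z_0)$. In the notation of \cref{lem:homogeneous}(ii) this corresponds to $\eta_0 = 0$ and $\xi_0 = z_0$, giving the two components as $\Or(\ve^{1/2}|z_0|)$ and $\Or(|z_0|)$ respectively. Since $z_0$ is uniformly bounded in $\ve$ by construction (it is the initial value of $\dot{y}_{\ve}$, which is bounded in the compatible case by definition, and in fact zero in the optimally compatible case), this term contributes $\Or(\ve^{1/2})$ to $y_{\ve}(t)$ and $\Or(1)$ to $z_{\ve}(t)$.

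Next I would bound the Duhamel integral. Applying \cref{lem:homogeneous}(ii) pointwise in $s$ with $\eta_0 = 0$ and $\xi_0 = \psi_{\ve}(s)$, the integrand has first component of size $\Or(\ve^{1/2}|\psi_{\ve}(s)|)$ and second component of size $\Or(|\psi_{\ve}(s)|)$. By \cref{assump:tech}, $\psi_{\ve}$ is uniformly bounded on $[0,t_f]$ independent of $\ve$, so integrating over $s \in [0,t]\subseteq [0,t_f]$ and using $t_f = \Or(1)$ yields an $\Or(\ve^{1/2})$ contribution to $y_{\ve}(t)$ and an $\Or(1)$ contribution to $z_{\ve}(t)$.

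Combining the two bounds gives $|y_{\ve}(t)| \leq C\ve^{1/2}$ and $|\dot{y}_{\ve}(t)| = |z_{\ve}(t)| \leq C$ on $[0,t_f]$. There is no real obstacle here; the work has already been done in \cref{lem:homogeneous} and the setup via Duhamel's principle. The only small point to keep in mind is that the implicit constant in \cref{lem:homogeneous} may grow with $t_f$ (as noted in that proof the Grönwall bound is exponential in $t_f$), but since $t_f$ is treated as a fixed $\Or(1)$ parameter this is absorbed into the universal constant $C$.
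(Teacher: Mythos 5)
Your proposal is correct and matches the paper's own argument, which likewise obtains \cref{lem:inhomogeneous} directly by applying the bounds of \cref{lem:homogeneous}(ii) to the two terms of the Duhamel representation, using the uniform bounds on $z_0$ and $\psi_{\ve}$ and the fact that $t_f=\Or(1)$. No gaps; the paper simply states that the lemma ``follows directly,'' and your write-up supplies exactly the omitted details.
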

 

Now we are ready to complete the proof of the  estimate \eqref{eqn:coarse_estimate}.\\

\begin{proof}[Proof of Theorem~\ref{thm:main_theorem}(i)]
    Substituting
    $y_{\ve} = x_{\ve}- A(r_{\ve})^{-1}b(r_{\ve})$ 
    into Eq.~\eqref{eqn:0SCF_rp}, 
    the dynamics for $(\re,\pe)$ are given by 
    \begin{subequations}\label{eqn:dynamics_re}
    \begin{align}
        \dot{r}_{\ve} &= \pe \\
        \dot{p}_{\ve} &= G(\re,A(\re)^{-1}b(\re))+ e_{\ve} {,}
    \end{align}
    \end{subequations}
    where
    \begin{equation}\label{eqn:def_e}
    e_{\ve} = x_{\ve}^{\top} \left(\frac{\partial A}{\partial r}(r_{\ve})\right)y_{\ve}
     + \frac{1}{2} y_{\ve}^{\top} \left(\frac{\partial A}{\partial r}(r_{\ve})\right)y_{\ve} 
     - \frac{\partial b^{\top}}{\partial r}(r_{\ve})y_{\ve} {.}
    \end{equation}
    Note that Eq.~\eqref{eqn:dynamics_re} only differs from Eq.~\eqref{eqn:MD_rp} 
    by the extra term $e_{\ve}$.
    Then by the Alekseev-Gr\"obner lemma (cf., Theorem 14.5 of \cite{HairerNorsettWanner1987})
    \begin{equation}\label{eqn:Alekseev}
        \left(\begin{array}{c}
        r_{\ve}(t)\\
        p_{\ve}(t)
        \end{array}\right) = 
         \left(\begin{array}{c}
        r(t)\\
        p(t)
        \end{array}\right) + 
        \int_0^t \mathfrak{R}^{s,t}(r_{\ve}(s),p_{\ve}(s))\left(\begin{array}{c}
        0\\
        e_{\ve}(s)
        \end{array}\right) ds,
    \end{equation}
    where
    \[
    \mathfrak{R}^{s,t}(\eta,\xi) = [\partial_\eta \Psi^{s,t}(\eta,\xi),\ \partial_\xi \Psi^{s,t}(\eta,\xi) ],
    \]
    with $\Psi^{s,t}(\eta,\xi) \in \R^{2d}$ denoting the solution of Eq.~\eqref{eqn:MD_rp} 
    with starting time $s$ and 
    initial values $r(s) = \eta$, $p(s) = \xi$. Now the derivative of the solution of an ODE with respect to its initial condition can be obtained by solving an ODE (cf., Theorem 14.3 of \cite{HairerNorsettWanner1987}):
    \begin{align*}
        \frac{\partial}{\partial t} \mathfrak{R}^{s,t}(\eta,\xi) &= 
            \left(\begin{array}{cc}
                0 & I_d \\
                \frac{\partial h}{\partial r}(\Psi^{s,t}(\eta,\xi)) & 0
            \end{array}
            \right)
            \mathfrak{R}^{s,t}(\eta,\xi) \\
            \mathfrak{R}^{s,s}(\eta,\xi) &= I_{2d},
    \end{align*}
    where $h(r)$ represents the right hand side of Eq.~\eqref{eqn:MD_rp_b}. 
    By our system of ODEs satisfied by $\mathfrak{R}^{s,t}(\eta,\xi)$, together with Assumption~\ref{assump:tech} (including \emph{a priori} bounds for $r_{\ve}$ and $p_{\ve}$) and 
    Gr{\"o}nwall's inequality, we have that
    $\mathfrak{R}^{s,t}(r_{\ve}(s),p_{\ve}(s))$ is bounded independently of $\ve$ and $s\in [0,t_f]$. 
    Therefore~\eqref{eqn:Alekseev} implies
    \[
       \lvert \ret -
  r_{\star}\rvert,\  \lvert \pet - p_{\star}\rvert \leq C \sup_{t\in [0,t_f]}\vert e_{\ve}(t)\vert
    \]
    on $[0,t_f]$.
   Then the definition of $e_{\ve}$ (i.e., Eq.~\eqref{eqn:def_e})
    and Lemma~\ref{lem:inhomogeneous}, 
    together with the \emph{a priori} bounds for $r_{\ve}$ and $p_{\ve}$, 
    imply that
    \[
        \vert e_{\ve}(t)\vert \leq C \vert y_{\ve}(t) \vert \leq 
        C\ve^{1/2},
    \]
    where $C$ has been possibly enlarged in the second inequality, and 
    thus
  \begin{equation}
      \label{eqn:coareEB}
       \lvert \ret -
  r_{\star}\rvert,\  \lvert \pet - p_{\star}\rvert \leq C \ve^{1/2}
    \end{equation}
    on $[0,t_f]$.

    The error bound for $x_{\ve}$ can then be obtained as follows. First compute
    \begin{align*}
        \vert x_{\ve}(t) - x_\star (t) \vert &\leq 
        \vert x_{\ve}(t) - A(r_{\ve}(t))^{-1}b(r_{\ve}(t)) \vert 
        + \vert A(r_{\ve}(t))^{-1}b(r_{\ve}(t)) - 
        A(r(t))^{-1}b(r(t)) \vert\\
        &= \vert y_{\ve}(t) \vert 
        + \vert A(r_{\ve}(t))^{-1}b(r_{\ve}(t)) - 
        A(r(t))^{-1}b(r(t)) \vert \\
        &\leq C \ve^{1/2}
        + \vert f(r_\ve (t)) - f(r(t)) \vert,
    \end{align*}
    where we have used Lemma~\ref{lem:inhomogeneous} in the last inequality, and we have defined $f(r) := A(r)^{-1} b(r)$. Now since the eigenvalues of $A(r)$ are uniformly bounded away from zero, $f$ is a $C^1$ function. Together with the \emph{a priori} bounds on $r_{\ve}$ and $r_{\star}$, we have $\vert f(r_\ve (t)) - f(r(t)) \vert \leq C \vert r_\ve (t) - r_\star (t) \vert$ for $C$ independent of $t, \ve$.
    Then by~\eqref{eqn:coareEB}, the bound $|x_\ve - x_\star\vert \leq C \ve^{1/2}$ follows. 
\end{proof}

\section{Sharp error analysis for $d'=1$}\label{sec:analysis_1d}
We focus on the case when the dimension of the latent variable satisfies $d'=1$. We retain all definitions made above
for general $d$. Since $d'=1$, we denote $k_{_{\ve}}=K(r_{\ve})$ to
emphasize that this is a scalar quantity. Moreover, $U_{\ve,\pm}^s(t)=e^{\pm \I(\kappa_{\ve}(t)-\kappa_{\ve}(s))/\sqrt{\ve}}$,
where $\kappa_{\ve}(t)=\int_{0}^{t}k_{\ve}(s)\,ds$. Note that since
$k_{\ve}(t)\geq C^{-1}$ for all $t$, $\kappa$ is then strictly increasing
with $\dot{\kappa}_{\ve}(t)=k_{\ve}(t)\geq C^{-1}$. Then the inverse mapping $\kappa^{-1}$
is well-defined. Moreover, recall our uniform bounds (in $\ve$) over
$k_{\ve}(t)=K(r_{\ve}(t))$, as well as $\dot{k}_{\ve}$ and $\ddot{k}_{\ve}$
(following from bounds on $r_{\ve},\dot{r}_{\ve},\ddot{r}_{\ve}$),
from which we have in particular that $\vert\dot{\kappa}\vert,\vert\ddot{\kappa}\vert\leq C$.\\

\begin{lem} \label{lem:asymptoticsSharp1D}
Let $\Phi_{\ve}^{s,t}(\eta_0,\xi_{0})$ be
the flow map of the homogeneous system~\eqref{eqn:homogeneous}. 
Then 
 \[   \Phi^{s,t}(0,\xi_0) = \left(\begin{array}{c}
\ve^{1/2}k_{\ve}(t)^{-1/2}k_{\ve}(s)^{-1/2}\sin\left(\frac{\kappa_{\ve}(t)-\kappa_{\ve}(s)}{\sqrt{\ve}}\right)\xi_0\\
k_{\ve}(t)^{1/2}k_{\ve}(s)^{-1/2}\cos\left(\frac{\kappa_{\ve}(t)-\kappa_{\ve}(s)}{\sqrt{\ve}}\right)\xi_0
\end{array}\right) + 
\left(\begin{array}{c}
\Or(\ve)\\
\Or(\ve^{1/2})
\end{array}\right) {.}\\
\]
\end{lem}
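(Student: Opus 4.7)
The plan is to refine the variation-of-parameters decomposition from Lemma \ref{lem:homogeneous} by exploiting the fact that in $d'=1$, the matrix $K$ is a positive scalar $k_\ve$ that commutes with itself at all times, and then extract sharp asymptotics by one integration by parts against the rapid oscillation. First, I would specialize the system \eqref{eqn:ODE_ab} for $c_\pm$ to the scalar case, where $U_\pm^{-1} K^{-1} \dot K U_\pm = k_\ve^{-1}\dot k_\ve$ and $U_\pm^{-1} K^{-1} \dot K U_\mp = e^{\mp 2\I (\kappa_\ve(t)-\kappa_\ve(s))/\sqrt{\ve}}\, k_\ve^{-1}\dot k_\ve$. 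The diagonal terms are explicitly integrable and motivate the renormalization $\tilde c_\pm(t) := k_\ve(t)^{1/2} c_\pm(t)$; a short calculation shows that $\tilde c_\pm$ satisfy the purely off-diagonal system
\begin{equation*}
\dot{\tilde c}_\pm(t) \;=\; \tfrac12\, e^{\mp 2\I(\kappa_\ve(t)-\kappa_\ve(s))/\sqrt{\ve}}\, k_\ve(t)^{-1}\dot k_\ve(t)\, \tilde c_\mp(t).
\end{equation*}

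Next, I would integrate this equation from $s$ to $t$ and perform one integration by parts, using $\frac{d}{dt'} e^{\mp 2\I (\kappa_\ve(t')-\kappa_\ve(s))/\sqrt{\ve}} = \mp \tfrac{2\I k_\ve(t')}{\sqrt{\ve}} e^{\mp 2\I(\kappa_\ve(t')-\kappa_\ve(s))/\sqrt{\ve}}$. The boundary terms are manifestly $\Or(\sqrt\ve\,(|\tilde c_+|_\infty+|\tilde c_-|_\infty))$, while the remaining bulk integrand, after differentiating $k_\ve^{-2}\dot k_\ve\,\tilde c_\mp$, stays uniformly bounded because $k_\ve,\dot k_\ve,\ddot k_\ve$ are bounded (this uses \cref{assump:tech}(v) and the chain rule applied to $K(r_\ve)$) and because $\dot{\tilde c}_\pm$ is bounded via the off-diagonal ODE together with the a priori bound from Lemma \ref{lem:homogeneous}. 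This yields $\tilde c_\pm(t) = \tilde c_\pm(s) + \Or\!\bigl(\sqrt\ve\,(|\tilde c_+(s)|+|\tilde c_-(s)|)\bigr)$, i.e., $c_\pm(t) = k_\ve(t)^{-1/2} k_\ve(s)^{1/2} c_\pm(s) + \Or\!\bigl(\sqrt\ve\,(|c_+(s)|+|c_-(s)|)\bigr)$ after undoing the renormalization.

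Finally, I would substitute the specific initial conditions for $(\eta_0,\xi_0)=(0,\xi_0)$: by \eqref{eqn:homogeneous_initial_value} we have $c_\pm(s) = \mp \tfrac{\I}{2}\sqrt\ve\, k_\ve(s)^{-1}\xi_0$, both of size $\Or(\sqrt\ve\,|\xi_0|)$, so the previous display upgrades the error from $\Or(\sqrt\ve\,|c_\pm(s)|)$ to $\Or(\ve\,|\xi_0|)$. Plugging the resulting expressions for $c_\pm(t)$ into $\tilde y(t)=U_+(t)c_+(t)+U_-(t)c_-(t)$ and combining the complex exponentials via $e^{\I\phi}-e^{-\I\phi}=2\I\sin\phi$ with $\phi = (\kappa_\ve(t)-\kappa_\ve(s))/\sqrt\ve$ produces the first stated component, with remainder $\Or(\ve)$. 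Similarly, using $\dot{\tilde y}(t) = \I\ve^{-1/2} k_\ve(t)[U_+ c_+ - U_- c_-]$ and $e^{\I\phi}+e^{-\I\phi}=2\cos\phi$, the leading term gives $k_\ve(t)^{1/2}k_\ve(s)^{-1/2}\xi_0\cos\phi$; the prefactor $\ve^{-1/2}k_\ve(t)$ costs one power of $\sqrt\ve$, so the residual is only $\Or(\sqrt\ve)$ in the second component, as claimed.

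The main obstacle is a bookkeeping one: one must be sure that after the renormalization $\tilde c = k_\ve^{1/2} c$ the diagonal parts cancel \emph{exactly}, so that the only surviving contribution is the oscillatory one amenable to IBP. This relies squarely on $k_\ve$ being scalar — in higher dimensions, $K^{1/2}$ and $\dot K$ do not commute, and the renormalization would leave a commutator remainder whose size is not dampened by the oscillation. This is precisely the ``special commutative property'' alluded to in the introduction, and it is the reason the proof does not extend cleanly to $d'>1$.
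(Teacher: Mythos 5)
Your proposal is correct and follows essentially the same route as the paper's proof: the renormalization $k_\ve^{1/2}c_\pm$ (the paper's $\gamma_\pm$) to cancel the diagonal terms exactly, the $\Or(\sqrt\ve\,|\xi_0|)$ a priori bounds from Lemma~\ref{lem:homogeneous}, a single integration by parts against $e^{\pm 2\I\kappa_\ve/\sqrt\ve}$ to gain the extra $\sqrt\ve$, and reassembly of $c_\pm(t)$ into the $\sin$/$\cos$ form via \eqref{eqn:homogeneous_initial_value}. The only cosmetic difference is that you integrate by parts directly in $t$ (dividing by $k_\ve$), whereas the paper first changes variables $u=\kappa_\ve(\tau)$; both are valid since $k_\ve$ is bounded below with $\dot k_\ve,\ddot k_\ve$ bounded.
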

\begin{proof}
The arguments used to prove this lemma are adapted from \cite{Tao2012}, where similar asymptotics are used to study Hermite polynomials.

As in the proof of Lemma~\ref{lem:homogeneous}, we omit dependence
on $\ve$ from the subscripts. 
Then we reproduce~\eqref{eqn:ODE_ab} from our proof of Lemma~\ref{lem:homogeneous} above with somewhat modified notation:
\begin{align*}
&\dot{c}_+=-\frac{1}{2}U_{+}^{-1}k^{-1}\dot{k}U_{+}c_++\frac{1}{2}U_{+}^{-1}k^{-1}\dot{k}U_{-}c_-,\\
&\dot{c}_-=\frac{1}{2}U_{-}^{-1}k^{-1}\dot{k}U_{+}c_+-\frac{1}{2}U_{-}^{-1}k^{-1}\dot{k}U_{-}c_-.
\end{align*}
Since $d'=1$, we can now commute operators to obtain 
\begin{align*}
&\dot{c}_+=-\frac{\dot{k}}{2k}c_+ + \frac{\dot{k}}{2k}e^{-2\I\kappa/\sqrt{\ve}}c_-,\\
&\dot{c}_-=-\frac{\dot{k}}{2k}c_-+\frac{\dot{k}}{2k}e^{2\I\kappa/\sqrt{\ve}}c_+.
\end{align*}

We introduce new variables $\gamma_+(t):=k(t)^{1/2}c_+(t)$ and $\gamma_-(t):=k(t)^{1/2}c_-(t)$. Note that
\begin{equation*}
\dot{\gamma}_+ = \,k^{1/2}\dot{c}_+ + \frac{\dot{k}}{2k^{1/2}}c_+
 =\,\frac{\dot{k}}{2k^{1/2}}e^{-2\I \kappa/\sqrt{\ve}}c_- {,}
\end{equation*}
 we have 
\[
\dot{\gamma}_+=\frac{\dot{k}}{2k}e^{-2\I\kappa/\sqrt{\ve}}\gamma_-, \quad
\dot{\gamma}_- = \frac{\dot{k}}{2k}e^{2\I\kappa/\sqrt{\ve}}\gamma_+.
\]
Recall our estimates (note that here we only focus on the case $\eta_0 = 0$)
\[
\vert c_+\vert,\vert c_-\vert \leq C\ve^{1/2}\vert\xi_{0}\vert
\]
from Lemma~\ref{lem:homogeneous}. It follows that
\[
\vert\gamma_+\vert,\vert\gamma_-\vert,\vert\dot{\gamma}_+\vert,\vert\dot{\gamma}_-\vert\leq C\ve^{1/2}\vert\xi_{0}\vert.
\]
The basic idea is that via ODEs for $\gamma_+,\gamma_-$, we know that $\gamma_+(t)-\gamma_+(s)$
can be written as an oscillatory integral of $\gamma_-$. Meanwhile,
our bounds on $\dot{\gamma}_-$ give us control over the oscillation
of $\gamma_-$, which guarantees some cancellation (corresponding to
a factor of $\sqrt{\ve}$) in the oscillatory integral. The same reasoning
applies with the roles of $\gamma_+$ and $\gamma_-$ exchanged. 

Now we carry out this argument. Write 
\begin{align*}
\gamma_+(t)-\gamma_+(s) & =\,\int_{s}^{t}\dot{\gamma}_+(\tau)\,d\tau\\
 & =\,\int_{s}^{t}\frac{\dot{k}(\tau)}{2k(\tau)}e^{-2\I\kappa(\tau)/\sqrt{\ve}}\gamma_-(\tau)\,d\tau\\
 & =\,\int_{\kappa(s)}^{\kappa(t)}\frac{\dot{k}(\kappa^{-1}(u))}{2k(\kappa^{-1}(u))}e^{-2\I u/\sqrt{\ve}}\gamma_-(\kappa^{-1}(u))\left[\kappa^{-1}\right]'(u)\,du.
\end{align*}
 Define 
\[
f(u):=\,\frac{\dot{k}(\kappa^{-1}(u))}{2k(\kappa^{-1}(u))}\left[\kappa^{-1}\right]^{'}(u)\gamma_-(\kappa^{-1}(u)).
\]
 By our previous discussion of uniform bounds, we have $\vert f\vert,\vert f'\vert\leq C\ve^{1/2}\vert\xi_{0}\vert$.
Then we rewrite our integral and integrate by parts: 
\begin{align*}
\gamma_+(t)-\gamma_+(s) & =\,\int_{\kappa(s)}^{\kappa(t)}f(u)e^{-2\I u/\sqrt{\ve}}\,du\\
 & =\,-\frac{\sqrt{\ve}}{2\I}\left(\left[f(u)e^{-2\I u/\sqrt{\ve}}\right]_{u=\kappa(s)}^{u=\kappa(t)}-\int_{\kappa(s)}^{\kappa(t)} f'(u)e^{-2\I u/\sqrt{\ve}}\,du\right),
\end{align*}
so 
\[
    \gamma_+(t) = \gamma_+(s) + \Or(\ve),
\]
i.e.,
\[
    k(t)^{1/2} c_+(t) = k(s)^{1/2} c_+(s) + \Or(\ve).
\]
Then by the uniform bound of $k(t)^{-1}$, we have
\[
    c_+(t) = k(t)^{-1/2}k(s)^{1/2}c_+(s) + \Or(\ve) 
    = -\frac{1}{2}\I \ve^{1/2}k(t)^{-1/2}k(s)^{-1/2}\xi_0 + \Or(\ve).
\]
A similar result holds for $\gamma_-$ by equivalent reasoning: 
\[
    c_-(t) = \frac{1}{2}\I \ve^{1/2}k(t)^{-1/2}k(s)^{-1/2}\xi_0 + \Or(\ve).
\]
Therefore by Lemma~\ref{lem:homogeneous} the flow map is given by
\begin{align*}
    \Phi^{s,t}(0,\xi_0) &= \left(\begin{array}{c}
e^{\I (\kappa(t)-\kappa(s))/\sqrt{\ve}}c_+(t) + e^{-\I (\kappa(t)-\kappa(s))/\sqrt{\ve}}c_-(t)\\
\I \ve^{-1/2} k(t) [e^{\I (\kappa(t)-\kappa(s))/\sqrt{\ve}}c_+(t) - e^{-\I (\kappa(t)-\kappa(s))/\sqrt{\ve}}c_-(t)]
\end{array}\right) \\
&= \left(\begin{array}{c}
\ve^{1/2}k(t)^{-1/2}k(s)^{-1/2}\sin\left(\frac{\kappa(t)-\kappa(s)}{\sqrt{\ve}}\right)\xi_0\\
k(t)^{1/2}k(s)^{-1/2}\cos\left(\frac{\kappa(t)-\kappa(s)}{\sqrt{\ve}}\right)\xi_0
\end{array}\right) + 
\left(\begin{array}{c}
\Or(\ve)\\
\Or(\ve^{1/2})
\end{array}\right) {.}
\end{align*}
\end{proof}

Now we turn again to the inhomogeneous residual system~\eqref{eqn:ODE_residue}.\\
\begin{lem}
\label{lem:inhomogeneousSharp1D} Let $y_{\ve}$ be the solution to
the residual system~\eqref{eqn:ODE_residue} with $y_{\ve}(0)=0$. Then
for $t\in[0,t_f]$,
\[
\left(\begin{array}{c}
y_{\ve}(t)\\
\dot{y}_{\ve}(t)
\end{array}\right)=\left(\begin{array}{c}
\ve^{1/2}k(t)^{-1/2}k(0)^{-1/2}\sin\left(\frac{\kappa(t)}{\sqrt{\ve}}\right)z_0\\
k(t)^{1/2}k(0)^{-1/2}\cos\left(\frac{\kappa(t)}{\sqrt{\ve}}\right)z_0
\end{array}\right)+\left(\begin{array}{c}
\Or (\ve)\\
\Or (\ve^{1/2})
\end{array}\right).
\]
\end{lem}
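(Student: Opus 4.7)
The plan is to combine Duhamel's principle with the sharp asymptotics of Lemma~\ref{lem:asymptoticsSharp1D}, and then to push the remaining convolution contribution down by an extra factor of $\sqrt{\ve}$ via one integration by parts that exploits rapid oscillation of the phase $(\kappa_{\ve}(t)-\kappa_{\ve}(s))/\sqrt{\ve}$.

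First I would reformulate \eqref{eqn:ODE_residue} as a first-order system exactly as in the derivation preceding Lemma~\ref{lem:inhomogeneous}, so that Duhamel's principle yields
\[
\begin{pmatrix} y_{\ve}(t) \\ \dot{y}_{\ve}(t) \end{pmatrix}
= \Phi_{\ve}^{0,t}(0, z_{0}) + \int_{0}^{t} \Phi_{\ve}^{s,t}\bigl(0,\psi_{\ve}(s)\bigr)\,ds.
\]
The homogeneous term $\Phi_{\ve}^{0,t}(0, z_{0})$ already matches the stated main term exactly by Lemma~\ref{lem:asymptoticsSharp1D}, so it suffices to show that the convolution integral is $\Or(\ve)$ in the first component and $\Or(\ve^{1/2})$ in the second.

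Next I would insert the expansion of Lemma~\ref{lem:asymptoticsSharp1D} into the integrand. Because $\Phi_{\ve}^{s,t}$ is linear in the initial data and $|\psi_{\ve}(s)|$ is uniformly bounded under Assumption~\ref{assump:tech}, the $\Or(\ve)$ and $\Or(\ve^{1/2})$ error terms from that lemma integrate trivially over the fixed interval $[0,t_f]$ to contributions of the desired orders. What remains are the two leading oscillatory integrals
\[
I_1 := \int_0^t \ve^{1/2} k_{\ve}(t)^{-1/2} k_{\ve}(s)^{-1/2} \sin\!\left(\frac{\kappa_{\ve}(t)-\kappa_{\ve}(s)}{\sqrt{\ve}}\right) \psi_{\ve}(s)\,ds,
\]
together with an analogous $I_2$ in which $k_{\ve}(t)^{1/2}$ replaces $\ve^{1/2} k_{\ve}(t)^{-1/2}$ and cosine replaces sine. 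Naively $I_1 = \Or(\ve^{1/2})$ and $I_2 = \Or(1)$. Using $\dot{\kappa}_{\ve}(s) = k_{\ve}(s)$, however, I can rewrite the trigonometric factor as $\pm\frac{\sqrt{\ve}}{k_{\ve}(s)}\frac{d}{ds}$ applied to its conjugate trigonometric function; a single integration by parts then shifts the $s$-derivative onto the smooth prefactor and gains an extra $\sqrt{\ve}$ from both the boundary and the interior terms, yielding $I_1 = \Or(\ve)$ and $I_2 = \Or(\ve^{1/2})$, as required.

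The main obstacle will be verifying that this integration by parts is legitimate, i.e., that $\dot{\psi}_{\ve}$ is uniformly bounded in $\ve$. Since $\psi_{\ve} = -\frac{d^2}{dt^2}[A(r_{\ve})^{-1}b(r_{\ve})]$, its derivative ultimately involves $\dddot{r}_{\ve}$, which by differentiating \eqref{eqn:General_Deterministic_a} depends on $\dot{x}_{\ve}$. \emph{A priori}, Assumption~\ref{assump:tech} only guarantees $\sqrt{\ve}\,|\dot{x}_{\ve}| \leq C$, which would be fatal. The key rescue is the decomposition $\dot{x}_{\ve} = \frac{d}{dt}[A(r_{\ve})^{-1} b(r_{\ve})] + \dot{y}_{\ve}$ together with the bound $|\dot{y}_{\ve}| \leq C$ from Lemma~\ref{lem:inhomogeneous}; combined with the \emph{a priori} control on $r_{\ve}$ and $\dot{r}_{\ve}$, this upgrades $\dot{x}_{\ve}$ to $\Or(1)$. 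That in turn gives $\dddot{r}_{\ve} = \Or(1)$ and $\dot{\psi}_{\ve} = \Or(1)$, closing the argument.
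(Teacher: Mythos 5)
Your proposal is correct and follows essentially the same route as the paper: Duhamel plus Lemma~\ref{lem:asymptoticsSharp1D} to reduce to the oscillatory convolution integrals, a single integration by parts against the phase $(\kappa_{\ve}(t)-\kappa_{\ve}(s))/\sqrt{\ve}$ to gain the extra $\sqrt{\ve}$ (the paper does this after the change of variables $u=\kappa_{\ve}(s)$, which is equivalent to your use of $\dot{\kappa}_{\ve}=k_{\ve}$), and the same key regularity step of upgrading $\dot{x}_{\ve}$ to $\Or(1)$ via $\dot{y}_{\ve}$ from Lemma~\ref{lem:inhomogeneous} so that $\dot{\psi}_{\ve}$ is uniformly bounded.
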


\begin{proof}
Recall that by introducing the auxiliary variable $z_{\ve}:=\dot{y}_{\ve}$,
this system can be reformulated as the first-order system 
\[
\left(\begin{array}{c}
\dot{y}_{\ve}(t)\\
\dot{z}_{\ve}(t)
\end{array}\right)=\left(\begin{array}{c}
z_{\ve}(t)\\
-\ve^{-1}A(t)y_{\ve}(t)
\end{array}\right)+\left(\begin{array}{c}
0\\
\psi_{\ve}(t)
\end{array}\right),
\]
 and Duhamel's principle yields 
\[
\left(\begin{array}{c}
y_{\ve}(t)\\
z_{\ve}(t)
\end{array}\right)=\Phi_{\ve}^{0,t}\left(0,z_{0}\right)+\int_{0}^{t}\Phi_{\ve}^{s,t}\left(0,\psi_{\ve}(s)\right)\,ds.
\]
 Thus, by Lemma~\ref{lem:asymptoticsSharp1D}, it suffices to show that 
\[
\int_{0}^{t}\Phi_{\ve}^{s,t}\left(0,\psi_{\ve}(s)\right)\,ds=\left(\begin{array}{c}
\Or (\ve)\\
\Or (\ve^{1/2})
\end{array}\right).
\]
 Also by Lemma~\ref{lem:asymptoticsSharp1D} we have that 
\[
\Phi_{\ve}^{s,t}(0,\psi_{\ve}(s))=\left(\begin{array}{c}
\ve^{1/2}k(t)^{-1/2}k(s)^{-1/2}\sin\left(\frac{\kappa(t)-\kappa(s)}{\sqrt{\ve}}\right)\psi_{\ve}(s)\\
k(t)^{1/2}k(s)^{-1/2}\cos\left(\frac{\kappa(t)-\kappa(s)}{\sqrt{\ve}}\right)\psi_{\ve}(s)
\end{array}\right) + 
\left(\begin{array}{c}
\Or(\ve)\\
\Or(\ve^{1/2})
\end{array}\right).
\]
 Thus it suffices to show that 
\[
I_{\ve}:=\int_{0}^{t}\sin\left(\frac{\kappa(t)-\kappa(s)}{\sqrt{\ve}}\right)k(s)^{-1/2}\psi(s)\,ds= \Or (\ve^{1/2})
\]
 and 
\[
J_{\ve}:=\int_{0}^{t}\cos\left(\frac{\kappa(t)-\kappa(s)}{\sqrt{\ve}}\right)k(s)^{-1/2}\psi(s)\,ds= \Or (\ve^{1/2}).
\]
 
 Changing variables by $u=\kappa(s)$ we obtain 
\[
I_{\ve}=\int_{0}^{\kappa(t)}\sin\left(\frac{\kappa(t)-u}{\sqrt{\ve}}\right)\left[\kappa^{-1}\right]'(u)k(\kappa^{-1}(u))^{-1/2}\psi(\kappa^{-1}(u))\,du.
\]
 Now define 
\[
g(u):=\left[\kappa^{-1}\right]'(u)k(\kappa^{-1}(u))^{-1/2}\psi(\kappa^{-1}(u)).
\]
As in the argument in Lemma~\ref{lem:asymptoticsSharp1D}, we will
need that $\vert g\vert,\vert \dot{g} \vert\leq C$ uniformly
in $\ve$. This could be guaranteed if we knew that $\vert\dot{\psi}\vert\leq C$.
(We have already seen that $\vert\psi\vert$ is uniformly bounded.)
Recall that 
\[
\psi(t)=-\frac{d^{2}}{dt^{2}}\left[A(r_{\ve}(t))^{-1}b(r_{\ve}(t))\right],
\]
 so by the $C^{3}$ assumption on $A,b$, it will suffice to show
a uniform bound on $\big\vert \frac{\ud^3 {r}_{\ve}}{\ud t^3}\big\vert$. Now differentiating
the XLMD system~\eqref{eqn:0SCF_rp} 
we see that it then suffices
to obtain a uniform bound on $\vert\dot{x}_{\ve}\vert$. But then
it suffices to obtain a uniform bound on $\vert\dot{y}_{\ve}\vert$,
since $y_{\ve}=x_{\ve}-A^{-1}(r_{\ve})b(r_{\ve})$. Indeed, such a
bound has already been obtained (Lemma~\ref{lem:inhomogeneous}).
Then in conclusion, $\vert g\vert,\vert\dot{g}\vert\leq C$
uniformly in $\ve$, as desired.

Now rewrite the integral for $I_{\ve}$ and integrate by parts: 
\begin{align*}
I_{\ve} & =\,\int_{0}^{\kappa(t)}\sin\left(\frac{\kappa(t)-u}{\sqrt{\ve}}\right)g(u)\,du\\
 & =\,-\ve^{1/2}\left(\left[\cos\left(\frac{\kappa(t)-u}{\sqrt{\ve}}\right)g(u)\right]_{u=0}^{u=\kappa(t)}-\int_{0}^{\kappa(t)}\cos\left(\frac{\kappa(t)-u}{\sqrt{\ve}}\right)g'(u)\,du\right),
\end{align*}
 from which it follows that $I_{\ve}=\Or(\ve^{1/2})$. The result $J_{\ve}=\Or(\ve^{1/2})$
is obtained similarly. This finishes the proof of the lemma. 
\end{proof}

\smallskip

\begin{rem}
Observe that $y_{\ve}$ is in fact $\Or(\ve)$
in the case of optimally compatible initial condition, i.e., $z_{0}=0$. Then in this case, to establish the $\Or(\ve)$ errors in $(r,p)$, we may follow the idea of coarse estimate in Section 3.2  and apply the theorem of Alekseev and Gr\"obner.
However, we present a more general proof below that encompasses both the compatible and the optimally compatible intial conditions. We have obtained a very precise understanding of the oscillatory nature of $y_{\ve}$---in
fact, an explicit formula up to an error of order $\Or(\ve)$---and
it is this that we use to show that it in fact only yields an
error of $\Or(\ve)$, even in the case of non-optimally-compatible $z_0$.\\
\end{rem}

\smallskip

\begin{proof}[Completion of the proof for the sharp estimate \eqref{eqn:sharp_estimate}]
Recall the XLMD system~\eqref{eqn:0SCF_rp}: 
\begin{align*}
\ddot{r}_{\ve}&=G(\re,\xe)\\
\ve\ddot{x}_{\ve}&=b(r_{\ve})-A(r_{\ve})x_{\ve}
\end{align*}
and the exact MD~\eqref{eqn:MD_rp}
\[
\ddot{r}_{\star}=G\left(r_{\star},A(r_{\star})^{-1}b(r_{\star})\right).
\]
Now we already know that $\vert y_{\ve}\vert\leq C\ve^{1/2}$ by Lemma~\ref{lem:inhomogeneous}, and
moreover $x_{\ve}=y_{\ve}+A(r_{\ve})^{-1}b(r_{\ve})$, so it follows
that $\vert x_{\ve}-A(r_{\ve})^{-1}b(r_{\ve})\vert\leq C\ve^{1/2}$.
Moreover, we know that $\vert r_{\ve}-r_{\star}\vert\leq C\ve^{1/2}$ as well from the 
coarse estimate, 
so  $\vert x_{\ve}-A(r_{\star})^{-1}b(r_{\star})\vert\leq C\ve^{1/2}$. Then
by the Taylor expansion of $G(\re,\xe)$ around $(r_{\star},A(r_{\star})^{-1}b(r_{\star}))$, 
it follows that 
\begin{align*}
\ddot{r}_{\ve}= &G(r_{\star},A(r_{\star})^{-1}b(r_{\star}))
+\left[\frac{\partial G}{\partial r}\left(r_{\star},A(r_{\star})^{-1}b(r_{\star})\right)\right][r_{\ve}-r_{\star}] \\
&\quad + \left[\frac{\partial G}{\partial x}\left(r_{\star},A(r_{\star})^{-1}b(r_{\star})\right)\right][\xe-A(r_{\star})^{-1}b(r_{\star})]+\Or(\ve).
\end{align*}
A further Taylor expansion tells that 
\begin{align*}
  \xe-A(r_{\star})^{-1}b(r_{\star}) &= \xe 
  - \left[A(\re)^{-1}b(\re) - \frac{\partial (A^{-1}b)}{\partial r}(r_{\star})[\re-r_{\star}] + \Or(\ve) \right] \\
  &= \ye + \frac{\partial (A^{-1}b)}{\partial r}(r_{\star})[\re-r_{\star}] + \Or(\ve){.}
\end{align*}
Then
\begin{align*}
\ddot{r}_{\ve}= &G(r_{\star},A(r_{\star})^{-1}b(r_{\star})) \\
&\quad +\left[\frac{\partial G}{\partial r}\left(r_{\star},A(r_{\star})^{-1}b(r_{\star})\right) + \frac{\partial G}{\partial x}\left(r_{\star},A(r_{\star})^{-1}b(r_{\star})\right)\frac{\partial (A^{-1}b)}{\partial r}(r_{\star}) \right][r_{\ve}-r_{\star}] \\
&\quad + \left[\frac{\partial G}{\partial x}\left(r_{\star},A(r_{\star})^{-1}b(r_{\star})\right)\right]\ye +\Or(\ve).
\end{align*}
Define 
\begin{align*}
\Upsilon(t)&:=\frac{\partial G}{\partial r}\left(r_{\star},A(r_{\star})^{-1}b(r_{\star})\right) + \frac{\partial G}{\partial x}\left(r_{\star},A(r_{\star})^{-1}b(r_{\star})\right)\frac{\partial (A^{-1}b)}{\partial r}(r_{\star}), \\
\Gamma(t)&:=\frac{\partial G}{\partial x}\left(r_{\star},A(r_{\star})^{-1}b(r_{\star})\right).
\end{align*}
 Note that $\Upsilon$ and $\Gamma$ do not depend on the parameter
$\ve$. Then we can write the dynamics for $r_{\ve}$ more simply
as 
\[
\ddot{r}_{\ve}=G(r_{\star},A(r_{\star})^{-1}b
(r_{\star})) + \Upsilon[r_{\ve}-r_{\star}]+\Gamma y_{\ve} + \Or(\ve).
\]
Define a new variable $\theta_{\ve}:=r_{\ve}-r_{\star}$, which measures
the error in the $r$ variable. Subtracting the ODEs for $r_{\ve}$
and $r_{\star}$ we obtain 
\[
\ddot{\theta}_{\ve}=\Upsilon\theta_{\ve}+\Gamma y_{\ve}+\Or(\ve).
\]
 Of course, since we have chosen $r_{\ve}(0)=r_{\star}(0)$ and $\dot{r}_{\ve}(0)=\dot{r}_{\star}(0)$,
we have the initial conditions $\theta_{\ve}(0)=0$, $\dot{\theta}_{\ve}(0)=0$.


We view the ODE for $\theta_{\ve}$ as a perturbation of the homogeneous 
ODE 
\[\ddot{\widetilde{\theta}}_{\ve}=\Upsilon\widetilde{\theta}_{\ve}.\]
The solution of this homogeneous ODE can be given as 
\begin{equation*}
   \left( \begin{array}{c}
       \widetilde{\theta}(t)  \\
       \dot{\widetilde{\theta}}(t) 
    \end{array}\right) = \Theta^s(t) 
    \left(\begin{array}{c}
       \widetilde{\theta}(s)  \\
       \dot{\widetilde{\theta}}(s) 
    \end{array}\right) 
\end{equation*}
where 
\[
    \Theta^s(t) = \mathcal{T} \exp \left(\int_s^t\left(\begin{array}{cc}
       0  & 1 \\
        \Upsilon(\tau) & 0
    \end{array}\right)d\tau \right).
\]
Here we have used the time ordering notation introduced earlier.
Since $t\mapsto\Upsilon(t)$ is $C^{1}$, $\Theta^s(t)$ is $C^{1}$ in both $t$ and $s$ (cf., Theorems 14.3, 14.4 of \cite{HairerNorsettWanner1987}). 
Then by Duhamel's principle, we have 
\begin{align*}
\left(\begin{array}{c}
\theta(t)\\
\dot{\theta}(t)
\end{array}\right) & =\,\int_{0}^{t}\Theta^s(t)\left(\begin{array}{c}
    0 \\
    \Gamma(s)\ye(s) 
\end{array}\right)\,ds+\Or(\ve)\\
 & =\,\int_{0}^{t}\Theta^s(t)\left(\begin{array}{c}
    0 \\
    \ve^{1/2}\Gamma(s)k_{\ve}(s)^{-1/2}k_{\ve}(0)^{-1/2}\sin\left(\frac{\kappa_{\ve}(s)}{\sqrt{\ve}}\right)z_0 
\end{array}\right)\,ds+\Or(\ve)\\
 & =\,\ve^{1/2}k_{\ve}(0)^{-1/2}z_{0}\int_{0}^{t}\Gamma(s)k_{\ve}(s)^{-1/2}\Theta^s(t)\left(\begin{array}{c}
     0  \\
     \sin\left(\frac{\kappa_{\ve}(s)}{\sqrt{\ve}}\right) 
 \end{array}\right)\,ds+\Or(\ve),
\end{align*}
 where in the last two steps we have used Lemma~\ref{lem:inhomogeneousSharp1D}.
So we have reduced our problem to showing that the oscillatory integral 
\[
\int_{0}^{t}\Gamma(s)k_{\ve}(s)^{-1/2}\Theta^s(t)\left(\begin{array}{c}
     0  \\
     \sin\left(\frac{\kappa_{\ve}(s)}{\sqrt{\ve}}\right) 
 \end{array}\right)\,ds
\]
 is $\Or(\ve^{1/2})$, where the product in the integrand is a matrix-vector
multiplication. This is a key difference from the estimate for general $d'$.

Note that $\Gamma(s),k_{\ve}(s),\Theta^s(t)$ are all $C^1$ in $s$ and 
bounded uniformly on $\ve$, we can employ
the integration-by-parts argument used for oscillatory integrals above,
i.e., we can rewrite the integral as 
\[
-\ve^{1/2}\left(\left[\Gamma(t)k_{\ve}(t)^{-1/2}\Theta^t(t)\left(\begin{array}{c}
0\\
\cos\left(\frac{\kappa_{\ve}(s)}{\sqrt{\ve}}\right)
\end{array}\right)\right]-\int_{0}^{t}\frac{d\left[\Gamma(s)k_{\ve}(s)^{-1/2}\Theta^s(t)\right]}{ds}\left(\begin{array}{c}
0\\
\cos\left(\frac{\kappa_{\ve}(s)}{\sqrt{\ve}}\right)
\end{array}\right)\,ds\right),
\]
which is evidently $\Or (\ve^{1/2})$.

This completes the proof that $r_\ve - r_\star, p_\ve - p_\star = \Or(\ve)$. To conclude the proof of Theorem~\ref{thm:main_theorem}(ii) we obtain the error bound for $x_\ve$ by essentially copying the argument at the end of the proof of Theorem~\ref{thm:main_theorem}(i). To wit, we recall from said argument that
\[
 \vert x_{\ve}(t) - x_\star (t) \vert \leq 
 \vert y_{\ve}(t) \vert + 
 C \vert r_\ve (t) - r_\star (t) \vert
\]
for $t\in [0,t_f]$, where $C$ is independent of $t, \ve$. But we just showed that the second term on the right-hand side of this inequality is $\Or (\ve)$. Meanwhile, by Lemma~\ref{lem:inhomogeneousSharp1D} we have that $y_\ve (t) = \Or( \ve^{1/2})$ in the general case of compatible initial condition and $y_\ve (t) = \Or( \ve)$ in the case of optimally compatible initial condition. Hence $x_\ve - x_\star$ is $ \Or(\ve^{1/2})$ in the former case and $ \Or(\ve)$ in the latter. This explains why the error of the latent variable differs between the compatible and optimally compatible cases, in spite of the fact that $\Or (\ve)$ error is achieved by $r_\ve, p_\ve$ in both cases.
\end{proof}
 
\section{Numerical results}\label{sec:numer}

We study the convergence order of XLMD under different initial conditions for the auxiliary variable $x$, using a toy model with 
$$
    U(r) = \frac{1}{4} |r|^4 + \cos\left(2\sum_{j=1}^3 r_j\right), \quad r = (r_1,r_2,r_3)^{\top} \in \RR^3,
$$
Here $A(r)$ a sparse matrix in $\RR^{20\times 20}$ with non-zero entries
\begin{align*}
    &A_{k,k}(r) = 2 + |r|^2, \quad 1 \leq k \leq 20,\\
    &A_{k,k+1}(r) = A_{k+1,k}(r) = -1, \quad 1 \leq k \leq 19,  \\
    &A_{k,k+2}(r) = A_{k+2,k}(r) = \frac{1}{2}(1-|r|^2), \quad 1 \leq k \leq 18, 
\end{align*}
and we define $b(r)\in\RR^{20}$ by 
$$ b_k(r) = \sin\left(\frac{k}{10} r_1 + \left(1-\frac{k}{20}\right)r_2 + r_3\right), \quad \forall 1 \leq k \leq 20. $$
The exact dynamics are initialized with conditions 
$$r_{\star}(0) = (0,0.5,1)^{\top}, \quad p_{\star}(0) = (1,0.5,-1)^{\top}. $$
The Verlet scheme~\cite{Verlet1967} is used to propagate both the exact dynamics and the XLMD. 
The time step size is fixed to be $10^{-5}$, 
and the time interval is fixed to be $[0,5]$. 

For XLMD, we initialize the dynamics with 
$$\re(0) = r_{\star}(0), \quad \pe(0) = p_{\star}(0), $$
and we consider three types of initial condition for the auxiliary variables.
\begin{itemize}
    \item Optimally compatible initial condition: 
    $$\xe(0) = x_{\star}(0), \quad \dxe(0) = \dot{x}_{\star}(0),$$
    computed via Eq.~\eqref{eqn:compatible_initial_value} and~\eqref{eqn:optimally_compatible_initial_value},
    \item Compatible initial condition: 
    $$\xe(0) = x_{\star}(0), \quad \dxe(0) = (0,\cdots,0)^{\top}.$$
    \item Incompatible initial condition: 
    $$\xe(0) = x_{\star}(0) + \frac{1}{2}(1,-1,1,-1,\cdots,1,-1)^{\top}, \quad \dxe(0) = (0,\cdots,0)^{\top}.$$
\end{itemize}
We perform the time propagation for each choice until the same final time and then measure the errors by computing $\max_{t\in[0,5]}\|\re(t)-r_{\star}(t)\|_2,\max_{t\in[0,5]}\|\pe(t)-p_{\star}(t)\|_2$, and $\max_{t\in[0,5]}\|\xe(t)-x_{\star}(t)\|_2$. 

\begin{figure}
    \centering
    \includegraphics[width=0.7\linewidth]{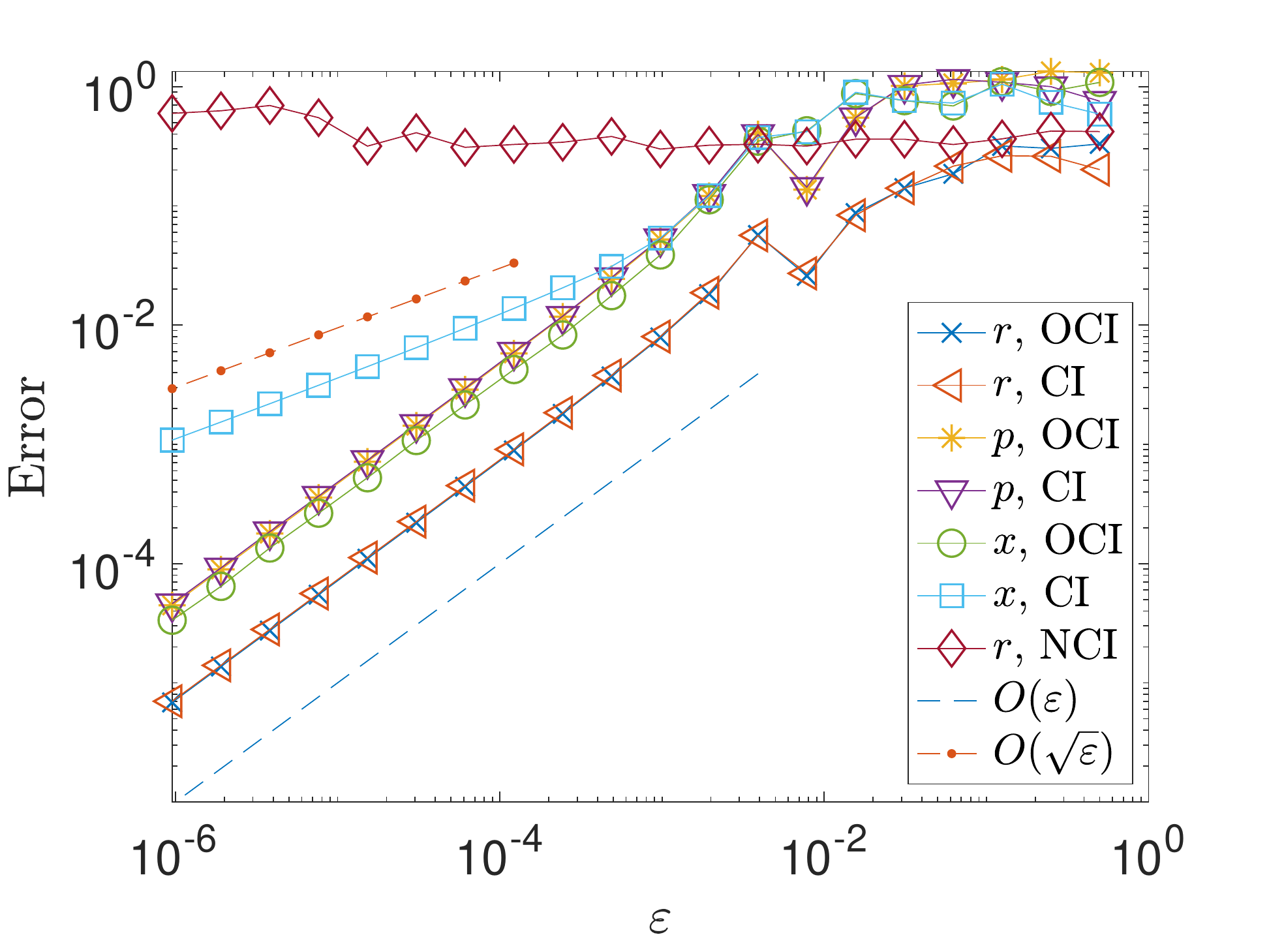}
    \caption{Numerical error versus $\ve$ for different initial conditions. In the legend, $r,p,x$ indicate the variable for which the error is measured. Moreover OCI is short for the optimally compatible initial condition, CI for the compatible value condition, and NCI for the incompatible initial condition.}
    \label{fig:0SCF_ini}
\end{figure}
\begin{table}[]
    \centering
    \begin{tabular}{|c|ccc|}\hline
         & \multicolumn{3}{c|}{Estimated order of convergence} \\
         & $r$ & $p$ & $x$ \\\hline
        Optimally compatible & 1.0067 & 1.0076 & 1.0021 \\
        Compatible & 1.0066 & 1.0055 & 0.5351  \\\hline
    \end{tabular}
    \caption{Numerically estimated order of convergence of XLMD for differential initial conditions. }
    \label{tab:0SCF_scaling}
\end{table}

The errors of the variables $r,p,x$ under different initial conditions and different choices of $\ve$ are shown in Figure \cref{fig:0SCF_ini}, and numerical estimates of asymptotic error scale are shown in Table \cref{tab:0SCF_scaling}. 
As $\ve$ goes to 0, both optimally compatible and compatible initial conditions yield greater accuracy, while there is no convergence if the initial condition of $x$ is incompatible. 
This shows that XLMD is only effective when the auxiliary system is correctly initialized. 
The convergence orders of $r$ and $p$ are 1 for both optimally compatible and compatible initial conditions, while the optimally compatible initial condition allows for better convergence in $x$ than does the compatible initial condition. 

Compared to our main theoretical result \cref{thm:main_theorem}, we find that the error bounds that we obtained for the setting of $d'=1$ are sharp in all cases, even though $d'>1$. Our analysis for general $d'$ is in fact sharp for the error in $x$ when the initial condition is only compatible. However, it is not sharp for the errors in $r$ and $p$, nor for the error in $x$ in the case of optimally compatible initial condition.

\section*{Acknowledgments} 

This work was partially supported by the National Science Foundation
under grant DMS-1652330 (D.A. and L.L.), by the Department of Energy under grant DE-SC0017867 and the U.S. Department of Energy, Office of Science, Office of Advanced Scientific Computing Research, Scientific Discovery through Advanced Computing (SciDAC) program (L.L.), and by the National Science Foundation Graduate Research Fellowship Program under grant DGE-1106400 and the National Science Foundation under Award No. 1903031 (M.L.). We thank Teresa Head-Gordon for helpful discussions.

\bibliographystyle{siam}
\bibliography{zeroscf}

\appendix
\label{sec:appendix}

\section{Proof of \cref{prop:solution}}

\begin{proof}
    The proposition is proved as follows: first we establish the existence and uniqueness of the solution on a neighborhood of $0$ by referring to standard theorems, then we prove the desired \emph{a priori} bounds for the solution on this neighborhood. The global existence and uniqueness on the entire time interval, as well as the bounds, can be then established by an extension theorem. 
    Throughout the proof $\varepsilon$ is viewed as a fixed positive parameter.

    By introducing $z_{\ve} = \sqrt{\ve}\dxe$, we first rewrite the exact MD as a first-order system of differential equations
    \begin{equation}
    \begin{split}
        \dot{r}_{\star}(t) &= p_{\star}(t), \\
         \dot{p}_{\star}(t) &= F(r_{\star}(t)) - \left[\frac{1}{2}b^{\top} A^{-1}\frac{\partial A}{\partial r}A^{-1}b - \frac{\partial b^{\top}}{\partial r}A^{-1}b\right](r_{\star}(t)), 
    \end{split}
    \end{equation}
    and XLMD as
    \begin{equation}
        \begin{split}
            \dre &= \pe,\\
            \dpe &= F(\re) - \frac{\partial Q}{\partial r}(\re,\xe),\\
            \dxe &= \frac{1}{\sqrt{\ve}}z_{\ve}, \\
            \dot{z}_{\ve} &= \frac{1}{\sqrt{\ve}}(b(\re)-A(\re)\xe).
        \end{split}
    \end{equation}
    By~\cite[Theorem 1.2 and 1.3]{Chicone2006}, there exists $\delta > 0$ (which might depend on $\ve$ for XLMD) such that there exist unique solutions $r_{\star}$ and $(\re,\xe)$ of MD and XLMD, respectively, on the interval $(0,\delta)$, and moreover $r_{\star}$, $\re$ and $\xe$ are $C^3$ functions. 

    Now we establish the uniform bounds of the solutions on the interval $(0,\delta)$. 
    For Eq.~\eqref{eqn:General_Dynamics}, consider the energy which is defined as 
    \[
    E_{\star}(t) = \frac{1}{2}\vert p_{\star}\vert^{2}+U(r_{\star})-\frac{1}{2}b(r_{\star})^{\top}A(r_{\star})^{-1}b(r_{\star}).
    \]
    Note that $\dot{E}_{\star}(t) = 0$
    and thus $E_{\star}(t) = E_{\star}(0)$ for all $t \in (0,\delta)$. In particular, $E(t)$ is bounded on this interval. 
    By Assumption~\ref{assump:tech},
    \[b(r_{\star})^{\top}A(r_{\star})^{-1}b(r_{\star}) \leq \frac{1}{C} |b(r_{\star})|^2{.}
    \] 
    Together with the assumptions that $b$ is bounded and $U$ is bounded from 
    below, we deduce that $U(r_{\star})-\frac{1}{2}b(r_{\star})^{\top}A(r_{\star})^{-1}b(r_{\star})$ is bounded from below. 
    Therefore $\frac{1}{2}\vert p_{\star}\vert^{2}$ is bounded from above, indicating 
    that $p_{\star} = \Or(1)$.
    After integration, the bound for $ r_{\star} = \Or(1)$ is immediately obtained. The bound for $\ddot{r}_{\star}$ can be obtained by 
    directly plugging the bound for $r_{\star}$ back into Eq.~\eqref{eqn:General_Dynamics}. 
    
    For Eq.~\eqref{eqn:General_Deterministic}, 
    there also exists a conserved energy $E_{\ve}(t) = 
    E_{\ve}(0)$, defined by
    \[
        E_{\ve}(t) =
        \frac{1}{2}\vert\pe\vert^{2} 
        + \frac{1}{2}\ve \vert\dot{\xe}\vert^{2} + U(\re) 
        + \frac{1}{2}\xe^{\top}A(\re)\xe - b(\re)^{\top}\xe.
    \]
    Again, by the uniformly positive definite property of $A$ and the
    uniform bound on $b$, 
    the interaction energy $\frac{1}{2}\xe^{\top}A(\re)\xe - b(\re)^{\top}\xe$
    is bounded from below. 
    Together with the assumption that $U$ is bounded from below, this implies 
    that $\frac{1}{2}\vert p_{\star}\vert^{2}$ and $\frac{1}{2}\ve \vert\dot{\xe}\vert^{2}$ are bounded from above, 
    indicating $\sqrt{\ve}\dot{x}_{\ve} = \Or(1)$ and $p_{\star} = \Or(1)$, from which it follows by integration that $r_{\star} = \Or(1)$.
    To obtain the uniform bound for $\ddot{r}_\ve$, 
    it is sufficient, based on Eq.~\eqref{eqn:General_Deterministic},
    to obtain a uniform bound for $\xe$. 
    This can be done via the energy $E_{\ve}(t)$ again. 
    Notice that the first three terms are all bounded from below, 
    so the sum of last two terms are bounded from above, which indicates that 
    \[
        \frac{1}{2C}|\xe|^2 - \sup_r\{|b(r)|\}\,|\xe| \leq \frac{1}{2}\xe^{\top}A(\re)\xe - b(\re)^{\top}\xe = \Or(1) {,}
    \]
    and thus $\xe = \Or(1)$.
    
    Finally, note that our derivation shows that our \emph{a priori} bounds hold on \emph{any} interval on which the MD and XLMD solutions exist. Hence an extension result~\cite[Corollary I-3-4]{HsiehSibuya1999} ensures the global existence and uniqueness of the solutions on the time interval $[0,t_f]$. 
\end{proof}

\end{document}